\newtheorem{theorem}{Theorem}[section]
\newtheorem{claim}{Claim}[theorem]
\newtheorem{lemma}[theorem]{Lemma}
\newtheorem{conjecture}[theorem]{Conjecture}
\theoremstyle{definition}
\newcommand{\eps}{\varepsilon}
\newcommand{\bZ}{\mathbb Z}
\newcommand{\bN}{\mathbb N}
\newcommand{\cN}{\mathcal{N}}
\newcommand{\bfj}{\mathbf{j}}
\newcommand{\lb}{\left(}
\newcommand{\rb}{\right)}
\newcommand{\lt}{\left}
\newcommand{\rt}{\right}
\newcommand{\cn}{\chi}
\newcommand{\cNq}{\cN_{1/4}}
\newcommand{\sqbinom}[2]{{#1 \brack #2}}
\newcommand{\cNh}{\cN_{1/2}}
\DeclareMathOperator{\wt}{wt}
\DeclareMathOperator{\cl}{cl}
\DeclareMathOperator{\codim}{codim}
\DeclareMathOperator{\PG}{PG}
\DeclareMathOperator{\GF}{GF}
\DeclareMathOperator{\rank}{rank}
\DeclareMathOperator{\supp}{supp}
\DeclareMathOperator{\ev}{\mathbf{E}}
\DeclareMathOperator{\spn}{span}
\newcommand{\del}{\!\setminus\!}
\begin{document}
%\sloppy

\title[Triangle-free binary matroids]{The critical number of dense triangle-free binary matroids}
\author[Geelen]{Jim Geelen}
\address{Department of Combinatorics and Optimization,
University of Waterloo, Waterloo, Canada}
% \email{jfgeelen@math.uwaterloo.ca}
\thanks{This research was partially supported by a grant from the
Office of Naval Research [N00014-10-1-0851].}

\author[Nelson]{Peter Nelson}
%\address{Department of Combinatorics and Optimization, University of Waterloo, Waterloo, Canada}

\subjclass{05B35}
\keywords{matroids, regularity}
\date{\today}
\begin{abstract}
We show that, for each real number $\eps > 0$ there is an integer $c$ such that,
if $M$ is a simple triangle-free binary matroid
with $|M| \ge \lt(\tfrac{1}{4} + \eps\rt) 2^{r(M)}$, then $M$ has critical number at most $c$.
We also give a construction showing that no such result holds when replacing $\tfrac{1}{4} + \eps$ with $\tfrac{1}{4}-\eps$ in this statement. 
This shows that the ``critical threshold" for the triangle is $\tfrac 1 4$.
We extend the notion of critical threshold to every simple binary matroid $N$ and
conjecture that, if $N$ has critical number $c\ge 3$, then
$N$ has critical threshold $1-i\cdot 2^{-c}$ for some $i\in \{2,3,4\}$.
We give some support for the conjecture by establishing lower bounds. 
\end{abstract}

\maketitle

\section{Introduction}

If $M$ is a simple binary matroid, viewed as a restriction of a rank-$r$ projective geometry $G \cong \PG(r-1,2)$, then the \emph{critical number} of $M$, denoted $\cn(M)$, is the minimum nonnegative integer $c$ such that $G$ has a rank-$(r-c)$ flat disjoint from $E(M)$.
A matroid with no $U_{2,3}$-restriction is \emph{triangle-free}. Our first two main theorems are the following:

\begin{theorem}\label{mainplus}
	For each $\eps > 0$ there exists $c \in \bZ$ such that every simple triangle-free binary matroid $M$ with $|M| \ge \lt(\tfrac{1}{4} + \eps\rt)2^{r(M)}$ satisfies $\cn(M) \le c$. 
\end{theorem}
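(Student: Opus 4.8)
The plan is to recast the statement additively and then run an induction on the rank driven by a density increment. Identify $\PG(r-1,2)$ with the set of nonzero vectors of $\GF(2)^r$; since every line of a binary geometry has the form $\{x,y,x+y\}$, the matroid $M$ is triangle-free precisely when $M$ is \emph{sum-free}, i.e.\ $(M+M)\cap M=\emptyset$ (degenerate coincidences are excluded because $0\notin M$). In these terms $\cn(M)\le c$ holds exactly when $\GF(2)^r$ has an $(r-c)$-dimensional subspace disjoint from $M$, equivalently when there is a surjective linear map $\phi\colon\GF(2)^r\to\GF(2)^c$ with $0\notin\phi(M)$. Two elementary remarks: by definition $M$ spans $\GF(2)^{r}$ where $r=r(M)$; and a sum-free subset of any $\GF(2)^n$ has density at most $\tfrac12$ (if $a\in M$ then $M$ and $M+a$ are disjoint), which in particular makes the statement vacuous unless $\eps\le\tfrac14$. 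The goal is to bound $c$ by a function of $\eps$ alone.

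\textbf{The inductive skeleton.} I would induct on $r$. For the step, seek a hyperplane $H$ of $\PG(r-1,2)$ (a codimension-$1$ subspace of $\GF(2)^r$) with $|M\cap H|$ large. The restriction $M\cap H$ is again simple and triangle-free, and the key observation is that $\cn(M)\le\cn(M\cap H)+1$: any subspace of $H$ disjoint from $M\cap H$ is, being contained in $H$, disjoint from all of $M\cap H^c$ too, hence from $M$. So the critical number grows by at most one per step, and to keep it bounded it suffices to arrange that the density of the restriction strictly increases by a fixed amount $\delta=\delta(\eps)>0$ at each step; since the density never exceeds $\tfrac12$, the recursion then terminates after $O(1/\delta)$ steps, and the base case (rank below a threshold depending on $\eps$) is trivial, any nonempty proper sum-free set avoiding a subspace of codimension at most its rank.

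\textbf{The density increment.} The engine is a counting identity. With $f=1_M$, the number of additive triples $\sum_{x+y+z=0}f(x)f(y)f(z)$ is $0$, because sum-freeness kills every solution with $x,y,z$ all in $M$ and $0\notin M$ kills the rest; by Fourier analysis this reads $\sum_{\gamma}\wh f(\gamma)^3=0$. Isolating $\gamma=0$ gives $\sum_{\gamma\ne0}\wh f(\gamma)^3=-\alpha^3$ with $\alpha=|M|2^{-r}\ge\tfrac14+\eps$, and combined with Parseval ($\sum_{\gamma\ne0}\wh f(\gamma)^2=\alpha-\alpha^2$) this forces a nonzero character $\gamma$ with $|\wh f(\gamma)|\ge\alpha^2/(1-\alpha)\ge\delta_0(\eps)>0$. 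Thus $M$ is unbalanced across the hyperplane $\{x:\gamma\cdot x=0\}$: one of the two halves $\{x:\gamma\cdot x=0\}$, $\{x:\gamma\cdot x=1\}$ carries density at least $\alpha+\delta_0$. If the heavy half is the \emph{linear} one, recurse on it: the density has gone up by $\delta_0$ and is still above $\tfrac14+\eps$, exactly as needed.

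\textbf{The hard case and the obstacle.} The difficulty, which I expect to be the crux, is that the excess density may sit on the \emph{affine} half $H^c=\{x:\gamma\cdot x=1\}$, and an affine slice of a sum-free set need not be sum-free, so one cannot simply recurse. What one has is that $M\cap H^c$ is dense in $H^c$ while its sumset $(M\cap H^c)+(M\cap H^c)\subseteq H$ is disjoint from $M\cap H$; so either that sumset is all of $H$ — whence $M\cap H=\emptyset$ and $\cn(M)\le 1$ — or Kneser's theorem in $H$ shows $M\cap H^c$ is efficiently covered by cosets of a proper subspace $W\le H$, after which one passes to $\GF(2)^r/W$ and iterates, controlling the total codimension consumed. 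I suspect the crude first-moment value $\delta_0\approx\tfrac1{12}$ is too weak to force this through at density exactly $\tfrac14+\eps$, and that the argument must instead invoke the classification of sum-free sets of density exceeding $\tfrac14$, or — to my mind cleaner — apply the $\GF(2)^r$ arithmetic regularity lemma once to descend to a bounded-rank model $\GF(2)^k$ with coset densities $\alpha_c$, and there use $\sum_{c_1+c_2+c_3=0}\alpha_{c_1}\alpha_{c_2}\alpha_{c_3}\approx0$ together with the average of the $\alpha_c$ exceeding $\tfrac14$ to see that the support of $\alpha$ avoids a subspace of $\GF(2)^k$ (automatically of codimension $<k$, hence bounded). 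Either way the genuinely delicate point is upgrading an approximate conclusion (``$M$ sits where it should up to $o(2^r)$ points'') to the exact bound $\cn(M)\le c$, since a single stray point can raise the critical number; closing that gap should need a second, careful use of sum-freeness on the exceptional part, and this is where the precise threshold $\tfrac14$ gets forced.
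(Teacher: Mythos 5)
Your outline is a strategy, not a proof, and the gap sits exactly at the point you flag yourself. The density-increment induction only works when the heavy side of the unbalanced hyperplane is the linear one; in the affine-heavy case you have no recursion, and none of the rescues you list closes it. Kneser's theorem is useless at these densities: the affine part has only about $(\tfrac14+\eps)2^{r-1}$ elements, so the sumset bound $2|A|-|\mathrm{stab}|$ is far below $|H|$ and forces nothing. Invoking ``the classification of sum-free sets of density exceeding $\tfrac14$'' is circular: boundedness of the critical number at density $>\tfrac14$ is essentially the statement being proved (its exact form is the paper's open Conjecture~\ref{goodbound}), and this affine-heavy case is not a corner case --- the paper's own lower-bound construction $M_{c,n}$ (a niveau-type set) has almost all of its mass on the affine side of a hyperplane, so the entire content of the theorem lives precisely where your induction stalls.

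Your alternative sketch --- one application of Green's regularity lemma plus the counting identity in a bounded quotient --- is indeed the paper's route, but the two steps you leave open (``the support of the coset densities avoids a subspace'' and upgrading an approximate statement to the exact bound despite a single stray point) are where the whole argument actually happens, and the paper resolves them by a mechanism absent from your proposal. It proves the stronger statement that some element lies in $\ge \beta 2^{r}$ triangles unless $\cn(M)\le c$: if the $\delta$-regular subspace $H$ of codimension $k\le c$ met $X$ in even one point $v_0$, then pairing the elements of each coset $H+w$ into pairs summing to $v_0$ shows every coset has density at most $\tfrac12+\delta$ (few triangles through $v_0$); the global density $\ge\tfrac14+\eps$ then forces strictly more than half of the cosets to be $\tfrac{\eps}{2}$-dense and regular, Lemma~\ref{trifreevectors} gives three such cosets summing to zero, and Lemma~\ref{sumlemma} produces $\ge\beta 2^{2r}$ triangles, a contradiction; hence $X\cap H=\varnothing$ and $\cn(M)\le k$. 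In other words, the ``single stray point'' is not cleaned up by a second use of sum-freeness on an exceptional set --- it is itself converted into a density cap inside every coset. Without that idea (or a substitute), your outline does not reach the conclusion.
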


\begin{theorem}\label{mainminus}
	For each $\eps > 0$ and each integer $c \ge 1$, there is a simple triangle-free binary matroid $M$ such that $|M| \ge \lt(\tfrac{1}{4}-\eps\rt)2^{r(M)}$ and $M$ has critical number $c$. 
\end{theorem}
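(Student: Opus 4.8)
The plan is to produce, for each integer $c \ge 1$ and each $\eps > 0$, a simple triangle-free binary matroid $M$ with $\cn(M) = c$ and $|M| \ge \lt(\tfrac14 - \eps\rt)2^{r(M)}$; in fact for each fixed $c$ we aim for a family of such matroids whose density tends to (and, for small $c$, exceeds) $\tfrac14$, so only the critical number requires care. It is convenient to identify the ambient $\PG(r-1,2)$ with $\GF(2)^r$ and to recall that $\cn(M) \le k$ exactly when $E(M)$ lies in a union of $k$ affine halfspaces $\{v : \ell_i(v)=1\}$, equivalently when $E(M)$ misses a flat of rank $r-k$; thus $\cn(M) \ge c$ says that $E(M)$ meets every flat of rank $r-c+1$.

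\emph{The case $c=1$} is handled by an affine geometry $\AG(r-1,2)$, which is triangle-free, has density $\tfrac12$, and has critical number $1$. \emph{The case $c=2$} is the building block. Fix a surjective linear map $\pi\colon \GF(2)^r \to \GF(2)^2$, let $K=\ker\pi$ (a flat of corank $2$), choose a linear section $x \mapsto v_x$ of $\pi$ with image a rank-$2$ subspace, and write $\GF(2)^2 \del \{0\} = \{x_1,x_2,x_3\}$, so that $v_{x_1}+v_{x_2}+v_{x_3}=0$. Take $M$ to be the restriction to $E := \lb v_{x_1} + (K \del \{0\})\rb \cup \{v_{x_2}, v_{x_3}\}$. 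Then $|E| = 2^{r-2}+1 > \tfrac14\cdot 2^r$. The set $E$ contains no line: a line meeting $E$ in three points would have all three in the coset $v_{x_1}+K$ (the point $v_{x_i}$ lies over $x_i$), forcing the sum of the three to lie over $x_1 \ne 0$. Also $\cn(M)\le 2$ since $E$ is disjoint from the corank-$2$ flat $K$; and $\cn(M) \ge 2$ because $E \cap (v_{x_1}+K)$ affinely spans the coset $v_{x_1}+K$, so any linear functional that is $\equiv 1$ on $E$ must be the pullback of a functional $\lambda$ on $\GF(2)^2$ with $\lambda(x_1) = \lambda(x_2)=\lambda(x_3)=1$, which is impossible.

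\emph{General $c$} is obtained by a fibred version of the $c=2$ construction. Take $\pi\colon \GF(2)^r \to \GF(2)^c$ with kernel $K$, a linear section $x \mapsto v_x$, and put $E = \bigcup_{x \in \GF(2)^c \del \{0\}} (v_x + F_x)$ for suitable $F_x \subseteq K$. As before, $E$ is triangle-free precisely when $(F_x+F_y)\cap F_z = \emptyset$ for every line $\{x,y,z\}$ of $\GF(2)^c$; and, intersecting $E$ with the flats $\pi^{-1}(W)$ for subspaces $W \le \GF(2)^c$, the condition $\cn(E)\ge c$ reduces to the analogous "meets every flat" statements for the restrictions of $E$ to those flats, which are again fibred constructions over the smaller spaces $W$. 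This points to a recursive choice of the $F_x$: over the points outside a chosen hyperplane of $\GF(2)^c$ one installs "large" sets (complements of a small subspace of $K$), which supply almost all the density; over the points of the hyperplane one installs, inside a small subspace of $K$, a scaled copy of the construction for $c-1$, which carries the extra critical number. Because the large fibres have density close to $1$ and occupy a bounded fraction of all the fibres, the overall density stabilises near $\tfrac14$ rather than decaying as $c$ grows.

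\emph{The main obstacle} is to arrange the $F_x$ so that the line condition, the bound $\cn(E) \ge c$ (with no accidental drop to $\cn(E)\le c-1$), and the density bound hold together, and in particular so that the fibred structure does not \emph{collapse}: if the assignment $x \mapsto F_x$ is even mildly "linearly determined" then $E$ falls inside a single halfspace and the critical number drops to $1$. Avoiding this while making the recursion propagate the exact critical number, and controlling the erosion of density through the recursion, is where the real work lies; the bookkeeping is cleanest if one fixes in advance a flag of subspaces of $K$ along which the successive "small" copies are nested, and tracks at each level both the critical number and the fraction of the kernel used.
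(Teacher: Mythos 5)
There is a genuine gap: the cases $c=1,2$ are fine, but for general $c$ — which is the entire content of the theorem — you only describe a plan and explicitly defer the construction of the fibres $F_x$, the verification of triangle-freeness, the lower bound on the critical number, and the density control ("this is where the real work lies"). Worse, the specific recursion you sketch cannot work as stated. If the fibres over the points of $\GF(2)^c$ outside a hyperplane $H'$ are complements of small subspaces of $K$, then for any two distinct such points $x,y$ one has $F_x+F_y=K$ (for any $w\in K$ the number of $a$ with $a\in U_x$ or $a+w\in U_y$ is less than $|K|$), and $x+y$ ranges over all of $H'\setminus\{0\}$ as $x,y$ vary; so triangle-freeness forces \emph{every} fibre over $H'\setminus\{0\}$ to be empty. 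Then $E$ avoids the codimension-$1$ subspace $\pi^{-1}(H')$ and $\cn(M)\le 1$, i.e.\ exactly the collapse you were trying to avoid. So the "large fibres of density close to $1$" idea is incompatible with installing a recursive copy over the hyperplane; any repair needs the large part itself to carry structure, which is the actual difficulty, and it is not addressed.

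For comparison, the paper resolves this with a single explicit (non-recursive) niveau-set construction in $\GF(2)^{n+1}$: $X_n$ is the set of vectors with first entry $0$ and Hamming weight greater than $n-c$, and $Y_n$ is the set with first entry $1$ and weight at most $\tfrac{1}{2}(n-c)$. Triangle-freeness is immediate from weight arithmetic (two elements of $Y_n$ sum to a vector of weight at most $n-c$, so not in $X_n$, and parity of the first coordinate rules out the other triple types), the density just below $\tfrac14$ comes from $|Y_n|$ being about half of a halfspace, and the lower bound $\cn\ge c+1$ is a short argument showing every subspace of codimension at most $c$ contains either a low-weight vector with first entry $1$ (hence in $Y_n$) or a vector within Hamming distance $c$ of the all-ones vector (hence in $X_n$). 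Your proposal would need a construction of comparable substance to replace the missing step; as written it does not prove the theorem.
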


That is, simple triangle-free binary matroids with density slightly more than $\tfrac{1}{4}$ have bounded critical number, and those with density slightly less than $\tfrac{1}{4}$ can have arbitrarily large critical number. Theorem~\ref{mainminus} refutes an earlier conjecture of the authors [\ref{gn14}]. As in [\ref{gn14}], the proof of Theorem~\ref{mainplus} depends on a regularity lemma due to Green [\ref{green}]; this material is discussed in Section~\ref{fourier}.

 The critical number was originally defined by Crapo and Rota [\ref{cr}] under the name of \emph{critical exponent}; our terminology follows Welsh [\ref{welsh}]. One can also define $\chi(M)$ as the minimum $c$ so that $E(M)$ is contained in a matroid whose ground set is the union of $c$ affine geometries. In particular, if $M$ is the cycle matroid of a graph $G$, then $\chi(M)$ is the minimum number of cuts required to cover $E(G)$, so $\chi(M) = 1$ precisely when $G$ is bipartite, and $\chi(M) = \lceil \log_2 (\chi(G)) \rceil$ in general. Thus, we can view critical number as a geometric analog of chromatic number; results in graph theory motivate much of the material in this paper.
 
In analogy to our two main theorems, Hajnal (see [\ref{erdossimonovits}]) gave examples of triangle-free graphs $G$ with minimum degree $\delta(G) \ge \lt(\tfrac{1}{3}-\eps\rt)|V(G)|$ and arbitrarily large chromatic number, and Thomassen [\ref{t02}] showed for each $\eps > 0$ that every triangle-free graph $G$ with $\delta(G) \ge \lb \tfrac{1}{3}+ \eps\rb |V(G)|$ has chromatic number bounded above by a function of $\eps$. 

In fact, something much stronger holds; in [\ref{bt}], Brandt and Thomass\'{e} showed that if $G$ is a  triangle-free graph $G$ with minimum degree $\delta(G) > \tfrac{1}{3}|V(G)|$, then $\chi(G) \in \{2,3,4\}$. The bound $\chi(G)\le 4$ is best possible;
H\"aggkvist [\ref{29}] found an example of a $10$-regular triangle-free graph on $29$ vertices with chromatic number $4$. 
We conjecture a similar strengthening of Theorem~\ref{mainplus}.
\begin{conjecture}\label{goodbound}
	If $M$ is a simple triangle-free binary matroid with $|M| > \tfrac{1}{4}2^{r(M)}$, then $\cn(M) \in \{1,2\}$. 
\end{conjecture}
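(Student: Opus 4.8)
We briefly indicate the approach we would take to Conjecture~\ref{goodbound}. Since $|M|>0$ forces $\cn(M)\ge 1$, the content of the conjecture is that $\cn(M)\le 2$ for every simple triangle-free binary matroid with $|M|>\tfrac14 2^{r(M)}$. Recall that $\cn(M)\le c$ holds exactly when $M$ admits a linear map into $\PG(c-1,2)$ sending $E(M)$ into the set of nonzero vectors; in particular $\cn(M)\le 2$ means precisely that $M$ is homomorphic to the triangle $U_{2,3}$. Writing $\alpha=2^{-r(M)}|M|$ and letting $\widehat{1_{E(M)}}$ denote the Fourier transform of the indicator function of $E(M)$ on $\bF_2^{r(M)}$, one has $\widehat{1_{E(M)}}(0)=\alpha$, $|\widehat{1_{E(M)}}(\gamma)|\le\alpha$ for all $\gamma$, and --- because $M$ is triangle-free --- the cubic identity $\sum_\gamma\widehat{1_{E(M)}}(\gamma)^3=0$; these are the basic constraints any proof must exploit. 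Via the Cayley-graph correspondence ($M$ is triangle-free exactly when the Cayley graph on $\bF_2^{r(M)}$ with connection set $E(M)$ is triangle-free), the conjecture is the $\bF_2$-linear analogue of the theorem of Brandt and Thomass\'{e} [\ref{bt}] that a triangle-free graph of minimum degree exceeding $\tfrac13|V|$ has chromatic number at most $4$. The threshold has dropped from $\tfrac13$ to $\tfrac14$, however, so the proof cannot merely apply [\ref{bt}] to the Cayley graph: that would require minimum degree above $\tfrac13\cdot2^{r(M)}$, and in any case would only give $\chi\le 4$ for the Cayley graph rather than $\cn(M)\le 2$ for $M$.

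The plan is to build a tower of density thresholds, mirroring the graph-theoretic progression from the Andr\'asfai--Erd\H{o}s--S\'os theorem, through the bounded-chromatic-number result of Thomassen [\ref{t02}], to the sharp bound of Brandt and Thomass\'{e} [\ref{bt}]. First I would prove the analogue of Andr\'asfai--Erd\H{o}s--S\'os: there is a constant $\beta_1>\tfrac14$ such that every simple triangle-free binary matroid with $|M|>\beta_1 2^{r(M)}$ is affine, that is, has $\cn(M)=1$. A preliminary step is to determine the densest non-affine triangle-free binary matroids --- the matroidal counterpart of the Andr\'asfai graphs --- which fixes $\beta_1$; the theorem itself should be within reach of the Fourier-analytic methods of Section~\ref{fourier}, since above $\beta_1$ one expects a single dominant Fourier coefficient $\widehat{1_{E(M)}}(\gamma_0)\approx-\alpha$ confining $E(M)$ to the affine hyperplane $\{x:\gamma_0\cdot x=1\}$, after which a short argument must rule out any overspill. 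Second --- and this is the crux --- I would prove a stability theorem: every triangle-free binary matroid with $\cn(M)\ge 3$ has density at most $\tfrac14$. This is exactly the contrapositive of the conjecture, and it is sharp, because the constructions of Theorem~\ref{mainminus} realise $\cn(M)=3$ at densities tending to $\tfrac14$; together the two statements pin the critical threshold of the triangle at exactly $\tfrac14$.

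The principal obstacle is the stability step. Green's regularity lemma [\ref{green}], on which Theorem~\ref{mainplus} rests, yields only $\cn(M)=O_\eps(1)$ under the hypothesis $|M|\ge(\tfrac14+\eps)2^{r(M)}$, and its quantitative losses are far too large to produce the exact value $2$ or to reach $\eps=0$; doing so requires understanding the extremal configurations at density precisely $\tfrac14$ with essentially no error term. In the graph setting this is where the Brandt--Thomass\'{e} argument becomes a long structural case analysis, organised around the classification of the ``Vega graphs'' --- the triangle-free graphs of minimum degree greater than $\tfrac13|V|$ and chromatic number $4$. The matroidal task is to describe all simple triangle-free binary matroids with $\cn(M)\ge 3$ and density within $o(1)$ of $\tfrac14$, to organise them into a finite list of parametrised families, and to check that none of these families exceeds density $\tfrac14$. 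The linear structure ought to make the ``cleaning'' steps more rigid than in the graph case --- one can pass to quotients by flats, average over the translation action of $\bF_2^{r(M)}$, and invoke the cubic Fourier identity above --- but carrying the analysis through to an exact classification is a substantial undertaking. A further difficulty is that the combinatorial reductions underlying Thomassen's and Brandt--Thomass\'{e}'s arguments, which concern neighbourhoods in maximal triangle-free graphs, have no immediate matroid translation and would have to be redeveloped, most plausibly by Fourier-analytic means, in the binary setting.
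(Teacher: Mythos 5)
This statement is a conjecture that the paper itself leaves open --- there is no proof of it in the paper --- and your proposal does not supply one either: it is a research programme, not a proof. The decisive gap is your second step, the ``stability theorem'' that every simple triangle-free binary matroid with $\cn(M)\ge 3$ has $|M|\le\tfrac14 2^{r(M)}$. As you yourself note, this is exactly the contrapositive of Conjecture~\ref{goodbound}, so the plan defers its entire content to a step for which no argument is given. The tools you point to cannot close it: Green's regularity lemma (Lemma~\ref{regularity}) and the counting lemma (Lemma~\ref{sumlemma}) lose an additive $\eps$ that is fatal at the exact threshold, and the Fourier constraints you list ($\widehat{1_{E(M)}}(0)=\alpha$, the cubic identity from triangle-freeness) are necessary conditions only --- nothing in the proposal extracts from them the conclusion $\cn(M)\le 2$, nor is any matroidal substitute offered for the Brandt--Thomass\'{e} structural analysis beyond the remark that one ``would have to be redeveloped.'' Invoking [\ref{bt}] through the Cayley-graph correspondence is, as you concede, a non-starter, since the density hypothesis is $\tfrac14$ rather than $\tfrac13$ and a bound on the chromatic number of the Cayley graph does not translate into $\cn(M)\le 2$.

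Two further points. First, your preliminary step (an Andr\'asfai--Erd\H{o}s--S\'os analogue: density above some $\beta_1>\tfrac14$ forces $\cn(M)=1$) is plausible --- in the language of sum-free sets in $\GF(2)^n$ such statements are classical --- but even granting it, it does not advance the conjecture, because the real difficulty lies in the window between $\tfrac14$ and $\beta_1$, where genuinely non-affine examples with $\cn(M)=2$ exist and where, by Theorem~\ref{mainminus}, matroids of \emph{arbitrarily large} critical number (not just $3$, as your sketch says) occur at densities tending to $\tfrac14$ from below. Second, note that the extremal behaviour here differs from the graph case in a way your analogy glosses over: Brandt--Thomass\'{e} gives $\chi\le 4$ with $4$ attained (H\"aggkvist's example), whereas the conjecture asserts the clean bound $\cn(M)\le 2$; so a faithful transplant of the Vega-graph classification is not obviously the right shape of argument, and the proposal gives no indication of what would replace it. In short, the proposal correctly identifies why the problem is hard, but it establishes nothing beyond what the paper already proves in Theorem~\ref{mainplus}.
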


\subsection*{Chromatic threshold}
Erd\H os and Simonovits~[\ref{erdossimonovits}] proposed the problem, for a given simple graph $H$ and $\alpha>0$,
of determining the maximum of $\chi(G)$ among all $H$-free graphs $G$ with
minimum degree at least $\alpha |V(G)|$. Extending on this idea, 
\L uczak and Thomass\' e~[\ref{tl}]
define the \emph{chromatic threshold} for $H$ to be the infimum of all $\alpha > 0$ such that there exists $c = c(H,\alpha)$ for which every graph $G$ with no $H$-subgraph and with minimum degree at least $\alpha|V(G)|$ has chromatic number at most $c$.

The aforementioned results for the triangle $C_3$ give that its chromatic threshold is $\tfrac{1}{3}$.
 The Erd\H os-Stone Theorem [\ref{es}] implies that the chromatic threshold for any bipartite graph $H$ is $0$, since large dense $H$-free graphs do not exist.  Quite remarkably,  
 the chromatic thresholds of all graphs have been explicitly determined 
 by Allen et al. in~[\ref{abgkm}];
 here we will state a simplified version of their result that limits the threshold to one of three particular values
 depending only on $\chi(H)$.
 \begin{theorem}\label{graphthresholds}
 	If $H$ is a graph of chromatic number $c \ge 3$, then  $H$ has chromatic threshold in $\left\{\frac{c-3}{c-2},\frac{2c-5}{2c-3},\frac{c-2}{c-1}\right\}$.
 \end{theorem}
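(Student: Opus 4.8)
The plan is to derive Theorem~\ref{graphthresholds} from the exact determination of chromatic thresholds in [\ref{abgkm}]. Allen et al.\ partition the graphs of chromatic number $c$ into a nested hierarchy $\cF_1\subseteq\cF_2\subseteq\{H:\chi(H)=c\}$, where membership is decided by combinatorial conditions describing how the proper $c$-colourings of $H$ can be made to interact with forests and with independent sets, and they prove that the chromatic threshold of $H$ equals $\tfrac{c-3}{c-2}$, $\tfrac{2c-5}{2c-3}$, or $\tfrac{c-2}{c-1}$ according to which layer $H$ occupies. Granting their theorem, the statement here is the routine check that these three values, viewed as functions of $c$, are exactly the three rationals displayed. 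For a mostly self-contained treatment one would instead separate the two directions: write down explicit lower-bound constructions realising each value, and appeal to [\ref{abgkm}] for the matching upper bounds.

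For the lower bounds, each construction has the same shape --- the join of a ``high-complexity'' graph $F$ with a balanced complete multipartite graph, with the part sizes tuned to make the minimum degree as large as the construction permits. First, every $H$ with $\chi(H)=c$ has chromatic threshold at least $\tfrac{c-3}{c-2}$: take $F$ triangle-free with chromatic number tending to infinity and odd girth exceeding $|V(H)|$ (Erd\H os), and let $G$ be the join of $F$ with the complete $(c-3)$-partite graph whose parts each have $|V(F)|$ vertices; any copy of $H$ in $G$ would induce a bipartite graph on its trace in $F$ (no short odd cycles) and independent sets on its trace in the multipartite side, forcing $\chi(H)\le 2+(c-3)<c$, so $G$ is $H$-free, while $\chi(G)=\chi(F)+(c-3)\to\infty$ and $\delta(G)/|V(G)|\to\tfrac{c-3}{c-2}$ as $F$ is taken larger. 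Second, $H=K_c$ (and the remaining $H\in\cF_2\setminus\cF_1$) attains $\tfrac{2c-5}{2c-3}$: replace $F$ by Hajnal's triangle-free graph with minimum degree close to $\tfrac13|V(F)|$ and large chromatic number, and join it with the balanced complete $(c-3)$-partite graph whose parts have about $\tfrac23|V(F)|$ vertices; the resulting graph $G$ is $K_c$-free because a $K_c$ would force a triangle inside the Hajnal part, its chromatic number is unbounded, and an elementary computation gives $\delta(G)/|V(G)|\to\tfrac{2c-5}{2c-3}$. Third, those $H$ with $\chi(H)=c$ whose vertex set admits no partition into $c-2$ independent sets together with one set inducing a forest attain $\tfrac{c-2}{c-1}$ (such $H$ exist --- the $c\times c$ rook's graph is one): here $G$ is the join of an Erd\H os graph $F$ of girth and minimum degree both exceeding $|V(H)|$ with the balanced complete $(c-2)$-partite graph whose parts have $|V(F)|$ vertices, and the non-existence of that partition of $V(H)$ is exactly what makes $G$ contain no copy of $H$. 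Deciding which of the three scenarios applies to a given $H$ is a finite check on its structure, and this is what the hierarchy $\cF_1\subseteq\cF_2$ records.

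The hard part will be the upper bounds, and in particular the gap phenomenon. That every $H$ with $\chi(H)=c$ has chromatic threshold at most $\tfrac{c-2}{c-1}$ can be extracted from a neighbourhood-complexity (bounded VC-dimension) argument in the spirit of \L uczak and Thomass\'e: once $G$ is $H$-free with $\delta(G)\ge(\tfrac{c-2}{c-1}+\eps)|V(G)|$, one produces a graph of bounded order into which $G$ maps homomorphically, which bounds $\chi(G)$. The genuinely difficult step --- and the reason a short self-contained proof seems out of reach --- is to rule out any chromatic threshold strictly between $\tfrac{c-3}{c-2}$ and $\tfrac{2c-5}{2c-3}$, or strictly between $\tfrac{2c-5}{2c-3}$ and $\tfrac{c-2}{c-1}$: one must show that an $H$-free graph whose minimum degree is just above one of these values already carries enough structure either to bound its chromatic number or to force the next threshold. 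This is precisely the stability analysis of near-extremal configurations carried out in [\ref{abgkm}] (for $c=3$ it is the content of the theorems of Thomassen~[\ref{t02}] and Brandt--Thomass\'e~[\ref{bt}] quoted above), and I would treat that analysis as a black box, so that the proof of Theorem~\ref{graphthresholds} reduces to the bookkeeping described in the first paragraph.
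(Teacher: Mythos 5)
The paper does not prove this statement at all: Theorem~\ref{graphthresholds} is presented as a simplified restatement of the main result of Allen, B\"ottcher, Griffiths, Kohayakawa and Morris~[\ref{abgkm}], so your plan of treating their theorem as a black box for the upper bounds and the exclusion of intermediate values is essentially the same approach as the paper's (which offers no argument beyond the citation). Your supplementary lower-bound joins and the remark that the top value $\tfrac{c-2}{c-1}$ is an easy upper bound are reasonable extras, but all the genuine content lives in~[\ref{abgkm}], exactly as you acknowledge.
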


\subsection*{Critical threshold}
For a simple binary matroid $N$, we define the \emph{critical threshold} of $N$ to be the infimum of all $\alpha > 0$ such that there exists $c = c(N,\alpha)$ for which every simple binary matroid $M$ with no $N$-restriction and with $|M| \ge \alpha 2^{r(M)}$ satisfies $\cn(M) \le c$. For each integer $k\ge 3$, let $C_k$ denote the $k$-element circuit $U_{k-1,k}$. Theorems~\ref{mainplus} and~\ref{mainminus} imply that 
the critical threshold for $C_3$ is $\tfrac{1}{4}$. 
In contrast, the main result of~[\ref{gn14}] shows that, if $k \ge 5$ is odd, then $C_k$ has critical threshold $0$.

A result of Bonin and Qin [\ref{bq}], itself a special case of the geometric density Hales-Jewett theorem [\ref{fk2}], implies that 
each simple binary matroid with critical number $1$ has critical threshold $0$.
More generally, the geometric Erd\H os-Stone theorem~[\ref{gn12}] gives the following
upper bound on the critical threshold of any simple binary matroid.

\begin{theorem}\label{threshupperbound}
	The critical threshold for a simple binary matroid $N$ is at most $1 - 2^{1-\cn(N)}$. 
\end{theorem}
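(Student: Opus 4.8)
The plan is to obtain this as a short corollary of the geometric Erd\H{o}s--Stone theorem of [\ref{gn12}]. Write $c = \cn(N)$ and fix $\eps > 0$. It suffices to exhibit an integer $c' = c'(N,\eps)$ such that every simple binary matroid $M$ with no $N$-restriction and $|M| \ge \lb 1 - 2^{1-c} + \eps \rb 2^{r(M)}$ satisfies $\cn(M) \le c'$; since $\eps > 0$ is arbitrary, this shows that the critical threshold of $N$ is at most $1 - 2^{1-c}$.

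The key step is to apply the geometric Erd\H{o}s--Stone theorem with error parameter $\eps$: it yields an integer $n$ (depending on $N$ and $\eps$) such that every simple binary matroid $M$ with $r(M) \ge n$ and $|M| \ge \lb 1 - 2^{1-\cn(N)} + \eps\rb 2^{r(M)}$ contains an $N$-restriction. Equivalently, every simple binary matroid with no $N$-restriction and density at least $1 - 2^{1-c} + \eps$ has rank at most $n - 1$. Since $\cn(M) \le r(M)$ for every simple binary matroid $M$ (the rank-$0$ flat of the ambient projective geometry has corank $r(M)$ and is trivially disjoint from $E(M)$), taking $c' = n - 1$ finishes the deduction.

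Essentially all of the work therefore sits inside the geometric Erd\H{o}s--Stone theorem, and there is no real obstacle in the reduction itself. The one point worth recording is why $1 - 2^{1-c}$ is the relevant density: every simple binary matroid with critical number at most $c - 1$ is automatically free of $N$-restrictions, because critical number does not increase under restriction, so an $N$-restriction of $M$ would force $\cn(M) \ge \cn(N) = c$; the densest such matroids are the complements of corank-$(c-1)$ flats, which have density $1 - 2^{1-c}$. This is exactly the barrier that the Erd\H{o}s--Stone theorem meets, so one should not expect a better bound from this line of argument.
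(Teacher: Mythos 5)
Your proposal is correct and matches the paper's approach: the paper gives no separate argument for this theorem, deriving it directly from the geometric Erd\H{o}s--Stone theorem of [\ref{gn12}], exactly as you do. Your reduction (density above $1-2^{1-\cn(N)}+\eps$ without an $N$-restriction forces bounded rank, hence bounded critical number) is the intended deduction, correctly spelled out.
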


We show, in fact, that this holds with equality fairly often.

\begin{theorem}\label{richcase}
	If $N$ is a simple binary matroid of critical number $c \ge 1$ so that $\cn(N \del I) = c$ for every rank-$(n-c+1)$ independent set $I$ of $N$, then the critical threshold for $N$ is $1 - 2^{1-c}$. 
\end{theorem}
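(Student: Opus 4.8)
We want to show: if $N$ is a simple binary matroid with critical number $c \ge 1$ such that deleting any rank-$(n-c+1)$ independent set leaves critical number still $c$, then $N$'s critical threshold equals $1-2^{1-c}$. By Theorem~\ref{threshupperbound} the threshold is at most $1-2^{1-c}$, so the content is the matching lower bound: for every $\eps>0$ and every integer $t$, we must construct a simple binary matroid $M$ with no $N$-restriction, with $|M| \ge (1-2^{1-c}-\eps)2^{r(M)}$, and with $\cn(M) > t$. The natural candidate is a "covering-type'' construction: take a projective geometry $\PG(r-1,2)$, pick a flat $F$ of corank $c-1$ (so $F \cong \PG(r-c,2)$), and let $M = \PG(r-1,2) \del (F \cup A)$ where $A$ is a small set used to boost the critical number; the density of $\PG(r-1,2)\del F$ is exactly $1 - 2^{1-c} + 2^{-r}$, tending to $1-2^{1-c}$ from above, so deleting a bounded-rank set $A$ keeps the density above $1-2^{1-c}-\eps$ once $r$ is large.

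First I would nail down why such an $M$ has no $N$-restriction. The key structural point: any simple binary matroid $M$ with $\cn(M) \le c-1$ that arises as $\PG(r-1,2)$ minus a corank-$(c-1)$ flat has every element lying "close to'' that flat. Concretely, if $M' \subseteq M$ is a restriction isomorphic to $N$, then $N = M'$ has critical number $c$, so by the hypothesis on $N$, for every rank-$(n-c+1)$ independent set $I$ of $N$ we have $\cn(N \del I) = c$ as well. The plan is to use this to derive a contradiction with $\cn(M) \le c-1$: an $N$-restriction inside $M$ would force, after an appropriate deletion of a bounded independent set, a sub-restriction of critical number $c$ sitting inside something of critical number $c-1$, which is impossible since critical number is monotone under restriction. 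So the hereditary hypothesis on $N$ is exactly what rules out $N$-restrictions in a matroid of smaller critical number — this is the conceptual heart.

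Second, I would handle the critical number of $M$. Deleting just the flat $F$ gives $\cn(\PG(r-1,2)\del F) = c-1$ (the complement of $F$ meets every flat of corank $< c-1$ but misses $F$ itself — wait, we need $\cn$ large, so we instead want to \emph{add} structure that forces large critical number). The correct move: rather than trying to make $\cn(M)$ large directly, use that the family of matroids with no $N$-restriction and density $\ge (1-2^{1-c}-\eps)2^{r}$ is what we're probing; we want to show $\cn$ is \emph{unbounded} on it. I would take $M$ to be $\PG(r-1,2) \del F$ where $F$ now is a flat chosen of corank $c-1$, but with $r$ ranging freely: this single construction has \emph{fixed} critical number $c-1$, which does not suffice. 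So instead I would iterate or take a direct sum / "stacking'' construction: let $M$ be obtained by repeatedly coning or by taking a matroid on $\PG(r-1,2)$ that omits a union of translates of a corank-$(c-1)$ flat arranged so that the complement misses every flat of corank $t$ — a Bose–Burton type construction generalizing the $C_3$ construction of Theorem~\ref{mainminus}. The density of $\PG(r-1,2)$ minus $j$ cosets of a corank-$(c-1)$ flat is $1 - j\cdot 2^{1-c} + j2^{-r}$; to stay above $1-2^{1-c}-\eps$ we can only afford $j=1$ asymptotically, which again caps $\cn$.

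The resolution — and this is the main obstacle I expect — is that the lower-bound construction cannot be "one flat''; it must be a cleverer dense set avoiding all large flats while still $N$-free, most likely modeled on the $C_3$ construction used to prove Theorem~\ref{mainminus} (a "blow-up'' of a high-girth or high-critical-number gadget by a projective geometry). Concretely I would take the Theorem~\ref{mainminus} construction, or rather its generalization: start from a simple binary matroid $P$ of large critical number $t$ and small density, embed it in $\PG(s-1,2)$, then replace each point by a copy of $\PG(r-s-1,2)$ (a "lift'' along a rank-$(r-s)$ flat), and finally delete a corank-$(c-1)$ sub-structure within each block to push the local density down to just above $1-2^{1-c}$. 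The critical number of such a lift is at least $\cn(P) = t$ (deleting a corank-$c'$ flat from the lift projects to deleting a corank-$c'$ flat from $P$), giving unboundedness, while $N$-freeness follows because any $N$-restriction, having critical number $c$, would have to be "seen'' inside a single block of critical number $c-1$ after contracting — using the hereditary hypothesis as in the second paragraph to show $N$ cannot span more than one block. The hard part is verifying precisely that no $N$-restriction sneaks across block boundaries; here I would argue that contracting a block's complementary flat sends any $N$-restriction to an $N/I$-restriction for a bounded independent set $I$, invoke $\cn(N\del I)=c$... one must be careful because contraction and deletion interact, so the actual argument likely needs the dual formulation of the hypothesis, and pinning that down cleanly is where the real work lies.
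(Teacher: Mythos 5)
Your framing of the problem is right as far as it goes: the upper bound is Theorem~\ref{threshupperbound}, and the content is a lower-bound family of $N$-free matroids with density at least $1-2^{1-c}-\eps$ and unbounded critical number. But from that point on the proposal has a genuine gap: you never produce such a family, and you explicitly leave open the one step that carries all the content, namely why the hypothesis on $N$ prevents an $N$-restriction in whatever construction is used. Moreover, the mechanism you gesture at --- ``a sub-restriction of critical number $c$ sitting inside something of critical number $c-1$, which is impossible by monotonicity'' --- cannot be the argument, since the ambient matroid $M$ must itself have arbitrarily large critical number (that is the whole point), so there is no crit-$(c-1)$ host to contradict. The way the hypothesis actually enters is different and more local to $N$: in the relevant construction, $M$ consists of everything outside a corank-$(c-2)$ flat $F_0$ together with a sparse high-girth-type matroid $M_0$ inside $F_0$ (Lemma~\ref{crit2thm} with $\delta=\tfrac{1}{2}$); any copy $R$ of $N$ then meets $F_0$ in a $(c-2)$-codimensional subspace $S$ of $R$ with $\cn(R|S)\ge 2$, and the girth condition on $M_0$ forces $R|S$ to have an \emph{independent} $1$-codimensional subspace $I$. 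Extending the hyperplane of $\cl(S)$ defining $I$ to a corank-$(c-1)$ flat meeting $E(N)$ only in $I$ gives $\cn(N\del I)\le (c-2)+1<c$, while $r_N(I)\le r(N)-c+1$; extending $I$ to an independent set of rank exactly $r(N)-c+1$ contradicts the hypothesis. Your sketch never isolates this step, and no ``dual formulation'' of the hypothesis is needed.

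In fact the heavy lifting is already available in the paper: Theorem~\ref{mainlb} gives the lower bound $1-(1-\delta)2^{2-c}$, where $\delta$ is determined by which classes $\cN_0,\cNq,\cN_{1/2}$ contain $(c-2)$-codimensional subspaces of $N$. The paper's proof of the present theorem is therefore only the short observation above, showing that the hypothesis rules out any $(c-2)$-codimensional subspace of $N$ lying in $\cN_0\cup\cNq$ (each such subspace has an independent $1$-codimensional subspace $I$, and deleting $I$ drops the critical number below $c$), which forces $\delta=\tfrac{1}{2}$; the case $c=1$ is immediate from Theorem~\ref{threshupperbound}. Your proposal neither invokes Theorem~\ref{mainlb} nor reconstructs its content: the constructions you float (the complement of a single corank-$(c-1)$ flat, unions of cosets, a block ``lift'') are either correctly discarded by you for having bounded critical number or insufficient density, or are left unverified at exactly the $N$-freeness step, so as written the proof does not go through.
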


In Conjectures~\ref{crit2} and~\ref{critc}, we predict the precise value of the critical threshold for 
any simple binary matroid. The following is a simplification of those conjectures
in the vein of Theorem~\ref{graphthresholds}. 

\begin{conjecture}\label{threshvals}
	If $N$ is a simple nonempty binary matroid, then the critical threshold for $N$ is equal to $1 - i \cdot 2^{-\cn(N)}$ for some $i \in \{2,3,4\}$. 
\end{conjecture}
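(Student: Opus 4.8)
Write $c = \cn(N)$. Since $1-2\cdot 2^{-c}$ is precisely the bound furnished by Theorem~\ref{threshupperbound}, the conjecture has two parts: a lower-bound construction witnessing whichever value $1-i\cdot 2^{-c}$ is asserted, and an improved upper bound of $1-3\cdot 2^{-c}$ or $1-4\cdot 2^{-c}$ when $N$ fails the hypothesis of Theorem~\ref{richcase}. The first task is thus to isolate the integer $i=i(N)\in\{2,3,4\}$. Fixing a representation of $N$ in $\PG(n-1,2)$ together with a rank-$(n-c)$ flat $F$ disjoint from $E(N)$, projection along $F$ places $E(N)$ among the $2^c-1$ points of $Q:=\PG(n-1,2)/F\cong\PG(c-1,2)$; I would take $2^c-i(N)$ to be the largest number of points of $Q$, over all such representations, whose affine blow-up (each chosen point of $Q$ replaced by a long affine layer in some ambient $\PG(r-1,2)$) can be made free of $N$-restrictions. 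The blow-up of all $2^c-1$ points is a restriction of $\PG(r-1,2)$ missing only a rank-$(r-c)$ flat, which for large $r$ does contain a copy of $N$; hence $i(N)\ge 2$. Theorem~\ref{richcase} is, in this language, the statement that $i(N)=2$ for ``rich'' $N$, and the value $i(N)=4$ — which matches the Erd\H os--Stone bound one level down — should be forced exactly when every deletion of a rank-$(n-c+1)$ independent set of $N$ has critical number $c-1$, in analogy with the r\^ole of the H\"aggkvist graph for $C_3$.

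For the \textbf{lower bound} I would imitate the construction behind Theorem~\ref{mainminus}. Take the extremal set $T$ of $2^c-i(N)$ points of $Q$ above, a long rank-$r$ geometry $G$, a rank-$(r-c)$ flat $F_0$, and let $M_0$ be the union over $t\in T$ of the affine layers of $G\to G/F_0\cong\PG(c-1,2)$; then $M_0$ has no $N$-restriction and $|M_0|=(2^c-i(N))2^{r-c}$, giving density arbitrarily close to $1-i(N)\cdot 2^{-c}$. To drive the critical number up one recurses exactly as Hajnal does for chromatic number: replace one affine layer by a lower-rank $N$-free matroid, already built, of the same density and larger critical number. The remaining labour is the bookkeeping — checking that the recursion introduces no $N$-restriction (which the defining property of $T$ should guarantee) and costs only $\eps$ in density.

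The \textbf{upper bound} is the main obstacle and the reason the statement is only a conjecture: one needs, for each $\eps>0$, a constant $c'$ so that every $N$-restriction-free simple binary matroid $M$ with $|M|\ge(1-i(N)\cdot 2^{-c}+\eps)2^{r(M)}$ has $\cn(M)\le c'$. For $i(N)=2$ this is Theorem~\ref{threshupperbound}; the cases $i(N)\in\{3,4\}$ require extending the Green-regularity argument of Theorem~\ref{mainplus} well beyond the sum-free (triangle) setting, presumably through a removal lemma for $N$-restrictions in dense binary matroids together with a structural classification of near-extremal $N$-free matroids keyed to $i(N)$ — the matroidal analogue of the machinery behind Theorem~\ref{graphthresholds}. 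In the absence of such tools the realistic partial goal, and the content promised by the abstract, is to verify the lower bounds above and confirm the conjecture on the first families where each value occurs: $N=C_3$ (done, $i=3$), the larger even circuits, and simple binary matroids of critical number $2$ and $3$.
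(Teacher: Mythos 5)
You are addressing Conjecture~\ref{threshvals}, which the paper does not prove: it only establishes the matching lower bound (Theorem~\ref{mainlb}, via Lemmas~\ref{bespokegc} and~\ref{crit2thm}) and the special case Theorem~\ref{richcase}. Your proposal is likewise not a proof — you explicitly defer the entire upper-bound half (everything beyond Theorem~\ref{threshupperbound}) to hypothetical removal-lemma machinery — so it can only be judged as a programme, and the programme has a concrete flaw in its central definition.

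Your calibration of $i(N)$ as $2^{\cn(N)}$ minus the largest number of points of the quotient $\PG(c-1,2)$ whose affine blow-up is $N$-free cannot be the right invariant, because such blow-ups have critical number at most $c=\cn(N)$, while the critical threshold is governed by $N$-free matroids of the given density with \emph{unbounded} critical number. Concretely, for $N=C_3$ (so $c=2$) the union of the two affine layers over two points of $\PG(1,2)$ is triangle-free — three vectors whose images lie in a fixed two-point subset of $\PG(1,2)$ never sum to zero — so your extremal count is $2$ and your recipe outputs $i(C_3)=2$, i.e.\ threshold $\tfrac{1}{2}$, contradicting Theorems~\ref{mainplus} and~\ref{mainminus} (threshold $\tfrac{1}{4}$, $i=3$) and your own closing remark. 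The paper's prediction (Conjectures~\ref{crit2} and~\ref{critc}) is keyed instead to which of the classes $\cN_0,\cN_{1/4},\cN_{1/2}$ contains $N|S$ for some $(c-2)$-codimensional subspace $S$: the value is $1-(1-\delta)2^{2-c}$, i.e.\ $i=4(1-\delta)$, where $\delta\in\{0,\tfrac14,\tfrac12\}$ arises as the density of a high-critical-number matroid (built from Lemma~\ref{bespokegc}, and in the $\delta=\tfrac14$ case from a niveau-type set) placed \emph{inside} the distinguished codimension-$(c-2)$ flat, with the rest of the geometry taken whole — a fractional contribution that an integer count of full affine layers cannot reproduce (note $i=3$ is odd). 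Your side guess that $i=4$ is characterised by deletions of rank-$(n-c+1)$ independent sets dropping the critical number also does not match the paper's $\cN_0$ condition (an independent $1$-codimensional subspace such that every odd circuit has at least four elements outside it). Consequently both halves of your plan — the identification of $i(N)$ and the Hajnal-style recursion aimed at it — target the wrong value in general, and the upper bound, which is the actual content of the conjecture, remains untouched here just as it does in the paper.
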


Specialised to projective geometries, our conjectures give:

\begin{conjecture}\label{pg}
	For each $t \ge 2$, the critical threshold for $\PG(t-1,2)$ is $1 - 3 \cdot 2^{-t}$. 
\end{conjecture}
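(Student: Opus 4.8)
Since the only flat of $\PG(t-1,2)$ disjoint from its ground set is the rank-$0$ flat, $\cn(\PG(t-1,2))=t$, so Theorem~\ref{threshupperbound} already shows that the critical threshold of $\PG(t-1,2)$ is at most $1-2^{1-t}$; but this is merely the density of the extremal $\PG(t-1,2)$-free binary matroid, the Bose--Burton geometry obtained from $\PG(r-1,2)$ by deleting a rank-$(r-t+1)$ flat, and that matroid has critical number only $t-1$. Conjecture~\ref{pg} asserts the true threshold is $2^{r-t}$ points lower, at $1-3\cdot 2^{-t}$, and I would prove it by matching a construction with a density theorem, each generalising the case $t=2$, which is precisely Theorems~\ref{mainminus} and~\ref{mainplus}. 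The construction (giving threshold $\ge 1-3\cdot 2^{-t}$) goes by induction on $t$, with base case $t=2$ being Theorem~\ref{mainminus}: for $t\ge3$, take a simple $\PG(t-2,2)$-free binary matroid $N'$ of rank $r-1$ with $|N'|\ge(1-3\cdot 2^{-(t-1)}-2\eps)2^{r-1}$ and $\cn(N')\ge c-1$, regard it as a spanning restriction of a hyperplane $H$ of $G=\PG(r-1,2)$, and let $M$ be the restriction of $G$ to $(E(G)\del H)\cup E(N')$. Then $M$ is $\PG(t-1,2)$-free (any rank-$t$ flat $F$ of $G$ meets $H$ in a flat of rank $\ge t-1$, which contains a rank-$(t-1)$ flat $F'$ of $H$, and $F'\not\subseteq E(N')=E(M)\cap H$ since $N'$ is $\PG(t-2,2)$-free); every flat of $G$ disjoint from $E(M)$ lies in $H$ and avoids $E(N')$, so $\cn(M)=\cn(N')+1\ge c$; and $|M|=2^{r-1}+|N'|\ge(1-3\cdot 2^{-t}-\eps)2^r$, completing the induction.

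\textbf{Upper bound: the reduction.} The hard direction — that every simple $\PG(t-1,2)$-free binary matroid $M$ of rank $r$ with $|M|\ge(1-3\cdot 2^{-t}+\eps)2^r$ has critical number bounded by a function of $t$ and $\eps$ — is the main obstacle. First I would reduce to the case that $M$ has a $\PG(t-2,2)$-restriction: otherwise $M$ is $\PG(t-2,2)$-free of density exceeding $1-3\cdot 2^{-(t-1)}+\eps$, and we are done by induction on $t$. So suppose $M$ has a rank-$(t-1)$ flat $F_0\subseteq E(M)$, and set $Q=E(G)\del E(M)$. The hypothesis now reads: $Q$ meets every rank-$t$ flat of $G$, and $|Q|\le(3\cdot 2^{-t}-\eps)2^r$; and since every flat contained in $E(G)\del F_0$ has rank at most $r-t+1$, it suffices to show $Q$ contains a flat of rank $r-t+1-O_{\eps,t}(1)$, for then $\cn(M)\le(t-1)+O_{\eps,t}(1)$. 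Now $E(G)\del F_0$ partitions into the $2^{r-t+1}-1$ affine translates $W\del F_0$ of $F_0$, one for each rank-$t$ flat $W\supseteq F_0$; each has $2^{t-1}$ points and meets $Q$, and a short count against $|Q|\le(3\cdot 2^{-t}-\eps)2^r$ forces a fraction at least $\tfrac12+2^{t-1}\eps$ of these translates to meet $Q$ in exactly one point. Over those translates, $Q$ is the graph of a function $\sigma$ from (a subset of relative density $\tfrac12+2^{t-1}\eps$ in) the rank-$(r-t+1)$ quotient $G\con F_0$ into the $(t-1)$-dimensional fibre, and the remaining task is to show the blocking condition on $Q$ forces $\sigma$ to agree with an affine-linear map on a flat of rank $r-t+1-O_{\eps,t}(1)$ of the quotient, whose graph is then the required large flat inside $Q$.

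\textbf{The crux.} This last step is a robust linearity statement — a function whose graph meets every rank-$t$ flat of $G$ must be affine-linear on a large flat — and this is where I would deploy Green's arithmetic regularity lemma, exactly as in the proof of Theorem~\ref{mainplus}. What makes it genuinely harder than $t=2$ is that the fibre now has dimension $t-1$ rather than $1$, so one is controlling a vector-valued rather than a $\{0,1\}$-valued object, and the blocking condition on $Q$ involves the intersection of each rank-$t$ flat with $F_0$ at every intermediate rank, coupling all $2^{t-1}$ coordinates of $\sigma$ simultaneously; I expect the regularity argument to have to be nested roughly $t$-fold rather than applied once, and making the bookkeeping land exactly at $1-3\cdot 2^{-t}$, rather than somewhere in $[1-3\cdot 2^{-t},\,1-2^{1-t}]$, is the delicate point and the essential content of the conjecture. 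Finally, it is worth stressing that the cheap moves that would seem to lower $t$ — contracting a point of $F_0$, or restricting to a hyperplane through $F_0$ — do not preserve $\PG(t-2,2)$-freeness, as the Bose--Burton geometries already witness, so no purely combinatorial induction can substitute for the analytic input.
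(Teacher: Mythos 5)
This statement is a conjecture: the paper does not prove it, and poses it precisely because the upper-bound direction is open; the only part of it the paper establishes is the lower bound, which follows from Theorem~\ref{mainlb} (every $(t-2)$-codimensional subspace of $\PG(t-1,2)$ is a triangle, which lies in $\cNq$ but not $\cN_0$, so $\delta=\tfrac14$ and the threshold is at least $1-\tfrac34\cdot 2^{2-t}=1-3\cdot 2^{-t}$). Your lower-bound construction is correct and is a clean alternative to that specialisation: the induction that places a dense $\PG(t-2,2)$-free matroid of large critical number in a hyperplane and adds the hyperplane's complement does verify $\PG(t-1,2)$-freeness, the density count, and $\cn(M)\ge \cn(N')+1$, with Theorem~\ref{mainminus} as the base case. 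So the lower-bound half of your proposal stands.

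The genuine gap is the upper bound, which is the actual content of the conjecture, and your proposal does not prove it. The reduction you describe (pass to a rank-$(t-1)$ flat $F_0\subseteq E(M)$, view the complement $Q$ of $E(M)$ fibred over the cosets of $F_0$, and count to get a partial section $\sigma$ defined on more than half the cosets) is fine as far as it goes, but the crux --- that the condition ``$Q$ meets every rank-$t$ flat'' forces $\sigma$ to agree with a linear section on a subspace of the quotient of bounded codimension, so that its graph is a flat of rank $r-t+1-O_{\eps,t}(1)$ inside $Q$ --- is exactly the step you leave as an expectation (``I expect the regularity argument to have to be nested roughly $t$-fold''). Neither Lemma~\ref{sumlemma} nor Lemma~\ref{regularity} as stated delivers such a robust linearity statement for a fibre of dimension $t-1$; the $t=2$ argument of Theorem~\ref{mainplus} counts triangles via a single application of regularity and does not produce a flat inside $Q$, so it does not iterate in the way you suggest without substantial new ideas. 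In short: what you have is a correct construction for one inequality plus a plausible programme for the other, not a proof; the missing robust-linearity/nested-regularity lemma is precisely why the statement remains a conjecture.
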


Finally, we pose the following strengthening of Conjectures~\ref{goodbound} and~\ref{pg}; 
the analogous result was proved for graphs by Goddard and Lyle in [\ref{gl}].

\begin{conjecture}
	If $t \ge 2$ and $N$ is a simple binary matroid with no $\PG(t-1,2)$-restriction such that $|N| > (1-3 \cdot 2^{-t})2^{r(N)}$, then $\cn(N) \in \{t-1,t\}$.
\end{conjecture}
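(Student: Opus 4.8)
The plan is to establish the two inequalities $\cn(N)\ge t-1$ and $\cn(N)\le t$ separately; the first is elementary, and the second carries essentially all of the difficulty. For the lower bound, if $\cn(N)\le t-2$ then $\PG(r(N)-1,2)$ has a rank-$(r(N)-t+2)$ flat $F$ disjoint from $E(N)$, and since $|F|=2^{r(N)-t+2}-1$ this forces $|N|\le(2^{r(N)}-1)-(2^{r(N)-t+2}-1)=(1-4\cdot 2^{-t})2^{r(N)}<(1-3\cdot 2^{-t})2^{r(N)}$, contrary to hypothesis. So $\cn(N)\ge t-1$ with no further work, and it remains to prove $\cn(N)\le t$ — equivalently, that $\PG(r(N)-1,2)$ has a rank-$(r(N)-t)$ flat disjoint from $E(N)$, equivalently that $E(N)$ lies in a matroid whose ground set is a union of $t$ affine geometries.

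The first task is to bound $\cn(N)$ at all in terms of $t$; this is precisely the ``upper'' direction of Conjecture~\ref{pg}, that the critical threshold of $\PG(t-1,2)$ is at most $1-3\cdot 2^{-t}$. Since $1-3\cdot 2^{-t}$ is strictly less than the bound $1-2^{1-t}$ of Theorem~\ref{threshupperbound}, this is not yet available and genuinely new input is needed even for boundedness. Following the proof of Theorem~\ref{mainplus}, I would apply Green's regularity lemma to $E(N)\subseteq\bF_2^{r(N)}$ to find a subspace $V$ of codimension $d=d(\eps,t)$ on most of whose cosets $E(N)$ is Fourier-uniform, and then pass to the reduced matroid $B$ on $\bF_2^{r(N)}/V\cong\bF_2^{d}$ consisting of the cosets whose $E(N)$-density is at least a small threshold $\eta$. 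The point is that $B$ inherits both hypotheses up to an $\eps$-error: it has density more than $1-3\cdot 2^{-t}-\eta$, and it is $\PG(t-1,2)$-free, because a rank-$t$ flat of $\bF_2^{d}$ contained in $B$ would lift, using the pseudorandomness of $E(N)$ on the relevant cosets together with the counting lemma, to a genuine $\PG(t-1,2)$-restriction of $N$.

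The crux is then a structure theorem for the bounded-rank object $B$: every $\PG(t-1,2)$-free binary matroid of bounded rank with density exceeding $1-3\cdot 2^{-t}-\eta$ should be ``close to'' the Bose--Burton extremal configuration — the complement of a rank-$(r(B)-t+1)$ flat — or, failing that, should be obtained by a bounded recursive construction from lower-rank $\PG(t-1,2)$-free matroids; in either case, it should have critical number at most $t$. Granting $\cn(B)\le t$, one selects a rank-$(d-t)$ flat $V'$ of $\bF_2^{d}$ disjoint from $E(B)$; its preimage $\pi^{-1}(V')$ is then a rank-$(r(N)-t)$ flat of $\bF_2^{r(N)}$ that meets $E(N)$ in only $O(\eta\,2^{r(N)-t})$ points. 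A final cleanup must remove these, either by re-applying the $\PG(t-1,2)$-free condition to the sparse remnant $E(N)\cap\pi^{-1}(V')$ so as to slide to a nearby rank-$(r(N)-t)$ flat genuinely disjoint from $E(N)$, or by upgrading the structure theorem to an exact statement so that no remnant occurs at all.

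The main obstacle is the structure theorem. For $t=2$ it is a statement about caps of density exceeding $\tfrac14$ in $\PG(r-1,2)$ and amounts to a sharpening of the Davydov--Tombak classification of large binary caps; in fact it is exactly Conjecture~\ref{goodbound}, which is open. For general $t$ no classification of dense $\PG(t-1,2)$-free binary matroids is known, and supplying one — the geometric analogue of the Goddard--Lyle theorem [\ref{gl}] and of the structural input behind Theorem~\ref{graphthresholds} — is where the real work lies: one must show that the points of $\PG(r(B)-1,2)$ not in $E(B)$, which form a blocking set for rank-$t$ flats of size between $2^{r(B)-t+1}-1$ and $3\cdot 2^{r(B)-t}$ (between $1$ and $\tfrac{3}{2}$ times the Bose--Burton minimum), must cluster around a single rank-$(r(B)-t+1)$ flat. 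A secondary difficulty is that the regularity reduction controls only densities, so the lift-back step appears to require the structure theorem in exact rather than approximate form, and reconciling this with the inherently approximate output of regularity is the technical heart of the argument.
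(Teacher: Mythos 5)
The statement you set out to prove is not a theorem of the paper at all: it is the paper's final, open conjecture, posed as a common strengthening of Conjectures~\ref{goodbound} and~\ref{pg}, so there is no proof in the paper to compare against --- and your proposal does not supply one. The elementary half is fine: if $\cn(N)\le t-2$ then a rank-$(r(N)-t+2)$ flat avoiding $E(N)$ forces $|N|\le(1-4\cdot 2^{-t})2^{r(N)}$, so $\cn(N)\ge t-1$ indeed needs no further work. But for the upper bound $\cn(N)\le t$ your argument is a programme, not a proof: the ``crux'' structure theorem you invoke --- that every bounded-rank $\PG(t-1,2)$-free matroid of density exceeding $1-3\cdot 2^{-t}-\eta$ has critical number at most $t$ --- is precisely the content of the conjecture itself restricted to bounded rank, and you concede that for $t=2$ it is exactly Conjecture~\ref{goodbound}, which is open. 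So the reduction is circular at the decisive step; the only upper bound actually available in the paper is Theorem~\ref{threshupperbound}, which gives the weaker threshold $1-2^{1-t}$ and, even there, bounds $\cn$ only by a constant depending on the density surplus $\eps$, never by $t$ itself.

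There is also a structural mismatch between your method and the statement. The conjecture has no $\eps$ of slack: the hypothesis is the exact inequality $|N|>(1-3\cdot 2^{-t})2^{r(N)}$ and the conclusion is the exact value $\cn(N)\le t$. Green's regularity and counting lemmas (Lemmas~\ref{regularity} and~\ref{sumlemma}) lose additive errors proportional to $\eps$ at every stage, which is exactly why Theorem~\ref{mainplus} must assume density $\tfrac14+\eps$ and concludes only $\cn(M)\le c(\eps)$. Your own cleanup step is symptomatic: after lifting a flat of the reduced matroid $B$ you are left with a remnant of $O(\eta\,2^{r(N)-t})$ points of $E(N)$ inside the lifted flat, and you offer no mechanism for ``sliding'' to a genuinely empty rank-$(r(N)-t)$ flat other than asking for the structure theorem in exact form --- which is again the open problem. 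The graph analogue you cite, the theorem of Goddard and Lyle [\ref{gl}], is proved by exact stability-type arguments rather than regularity, consistent with the fact that the paper leaves the matroid statement as a conjecture. In short: your lower bound is correct and your reduction to bounded rank is a sensible heuristic, but the core of the statement remains unproved in your proposal, so it cannot be accepted as a proof.
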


\section{Regularity}\label{fourier}

Green used Fourier-analytic techniques to prove his regularity
lemma for abelian groups and to derive applications
in additive combinatorics; these techniques are discussed in greater 
detail in the book of Tao and Vu~[\ref{tv06}, Chapter 4]. 
Fortunately, although this theory has many technicalities,
the group $\GF(2)^n$ is among its simplest applications. 

Let $V = \GF(2)^n$ and let $X\subseteq V$.
Note that, if $H$ is a $1$-codimensional subspace of $V$,
then $|H| = |V \del H|$.
We say that $X$ is {\em $\eps$-uniform} if
for each $1$-codimensional subspace $H$ of $V$ we have
$$ | \,|H\cap X| - |X\del H|\, | \le \eps |V|.$$
In Lemma~\ref{sumlemma} we will see that,
for small $\eps$, the $\eps$-uniform sets are `pseudorandom'.

Let $H$ be a subspace of $V$.
For each $v \in V$, let $H_v(X) = \{h \in H: h + v \in X\}$.
For $\eps > 0$, we say $H$ is \emph{$\eps$-regular}
with respect to $V$ and $X$ if $H_v(X)$ is $\eps$-uniform
in $H$ for all but $\eps |V|$ values of $v \in V$.

Regularity captures the way that $X$ is distributed among
the cosets of $H$ in $V$.  For $v\in V$,
we let $X+v = \{x+v\, : \, x\in X\}$;
thus $X+v$ is a translation of $X$. Note that
$X+v$ is $\eps$-uniform if and only if $X$ is.
Also note that $H_v(X) + v = X\cap H'$ where $H'=H+v$
is the coset of $H$ in $V$ that contains $v$.
Therefore, if $u,v\in H'$, then
$H_u(X)$ and $H_v(X)$ are translates of one another.
So $H$ is $\eps$-regular if, for all but an 
$\eps$-fraction of cosets $H'$ of $H$,
the set $(H'\cap X)+v$ is $\eps$-uniform  in $H$ for some $v\in H'$.

The following result of Green [\ref{green}] guarantees a regular
subspace of bounded codimension. Here $T(\alpha)$ denotes an
exponential tower of $2$'s of height $\lceil \alpha \rceil$.

\begin{lemma}[Green's regularity lemma] \label{regularity}
Let $X$ be a set of points in a vector space $V$ over $\GF(2)$ and let $0 < \eps  < \tfrac{1}{2}$.
Then there is  a
subspace $H$ of $V$, having codimension at most $T(\eps^{-3})$, that is $\eps$-regular with respect to $X$
and $V$. 
\end{lemma}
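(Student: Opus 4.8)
The plan is to prove Lemma~\ref{regularity} by a Fourier-analytic energy-increment argument, the $\GF(2)^n$ analogue of Szemer\'edi's proof of the graph regularity lemma. Identify $V=\GF(2)^n$ with its own dual and, for $f\colon V\to\bR$, write $\wh f(\gamma)=|V|^{-1}\sum_{x\in V}f(x)(-1)^{\ip{\gamma}{x}}$. The starting observation is that $\eps$-uniformity is precisely a bound on Fourier coefficients: if $\gamma\neq 0$ and $H'=\{x:\ip{\gamma}{x}=0\}$, then $|H'\cap X|-|X\del H'|=\sum_{x\in X}(-1)^{\ip{\gamma}{x}}=|V|\,\wh{1_X}(\gamma)$, so $X$ is $\eps$-uniform exactly when $|\wh{1_X}(\gamma)|\le\eps$ for every nonzero $\gamma$, and likewise with $V$ replaced by any subspace.

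Next I would translate $\eps$-regularity of a subspace $H$ into spectral language. Put $W=H^\perp$, a subspace of the dual with $\dim W=\codim H$. A short Poisson-summation computation shows that, for $\psi\notin W$ regarded as a character of $H$, the Fourier transform of $1_{H_v(X)}$ on $H$ satisfies $\wh{1_{H_v(X)}}(\psi)=(-1)^{\ip{\psi}{v}}\sum_{\lambda\in W}(-1)^{\ip{\lambda}{v}}\wh{1_X}(\psi+\lambda)$, a quantity depending only on the coset $\psi+W$ and on $v$ modulo $H$. Fixing a nontrivial coset $C=\psi+W$ and applying Parseval as $v$ ranges over $V/H$ (which is the dual of $W$) bounds the number of $v$ for which this exceeds $\eps$ in modulus by $\eps^{-2}2^{\codim H}\sum_{\lambda\in W}|\wh{1_X}(\psi+\lambda)|^2$. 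Summing over all nontrivial $C$, it follows that if $H$ is not $\eps$-regular then the spectral mass of $1_X$ lying off $W$ is substantial: after choosing, for each coset $v+H$ that is exceptional, a witnessing frequency $\psi_v\notin W$, the cosets of $W$ meeting $\{\psi_v\}$ carry total spectral mass more than $\eps^3$.

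This drives the iteration. Let $E(W)=\sum_{\gamma\in W}|\wh{1_X}(\gamma)|^2$; this is nondecreasing as $W$ grows and, by Parseval, is at most $\sum_{\gamma}|\wh{1_X}(\gamma)|^2=|X|/|V|\le 1$. Start with $W_0=\{0\}$, so $H_0=V$. Given $W_i$ with $H_i=W_i^\perp$: if $H_i$ is $\eps$-regular, stop; otherwise let $W_{i+1}$ be spanned by $W_i$ together with all the witnessing frequencies $\psi_v$. Since any single $\psi_v$ lying in a coset $C$ already pulls all of $C$ into $W_{i+1}$, and each such $C$ carries spectral mass at least $\eps^2 2^{-\codim H_i}$ out of a total of at most $1$, at most $\eps^{-2}2^{\codim H_i}$ new cosets can enter, so $\codim H_{i+1}\le\codim H_i+\eps^{-2}2^{\codim H_i}$; and by the mass estimate above $E(W_{i+1})\ge E(W_i)+\eps^3$. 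As the energy stays in $[0,1]$, the process halts after at most $\lceil\eps^{-3}\rceil$ steps, and since the codimension grows at most exponentially per step starting from $0$, the final codimension is bounded by a tower of $2$'s of height $\lceil\eps^{-3}\rceil$, that is, $T(\eps^{-3})$.

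The step I expect to be the main obstacle is the bookkeeping that converts the recursion $\codim H_{i+1}\le\codim H_i+\eps^{-2}2^{\codim H_i}$, iterated $\lceil\eps^{-3}\rceil$ times, into exactly the tower bound in the statement, together with the small but necessary care that the hypothesis $\eps<\tfrac12$ is what makes ``more than $\eps|V|$ exceptional cosets'' an honest obstruction and keeps the first step well behaved. The Fourier identities themselves --- the Poisson-summation formula for $\wh{1_{H_v(X)}}$ and the Parseval bound on exceptional cosets --- are routine once the normalisations are pinned down; the point requiring attention is lining up the constants so that each non-regular step buys energy of order $\eps^3$, not some other power of $\eps$.
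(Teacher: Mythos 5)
The paper does not prove this lemma: it is quoted from Green [\ref{green}], so there is no internal argument to compare against, and your sketch is essentially a reconstruction of Green's own Fourier-analytic energy-increment proof rather than a different route. Most of your details check out. The identification of $\eps$-uniformity with $|\wh{1_X}(\gamma)|\le\eps$ for all $\gamma\ne 0$, the Poisson-summation formula for $\wh{1_{H_v(X)}}$, and the Parseval bound on the number of cosets witnessed by a fixed frequency coset $C=\psi+W$ are all correct; in particular, summing that Parseval bound only over the cosets of $W$ that actually occur as witnesses is exactly the right way to turn ``more than $\eps 2^{\codim H}$ bad cosets'' into ``total new spectral mass more than $\eps^3$,'' and it correctly handles the possibility that many bad cosets share the same witnessing coset.

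The genuine weak point is the one you flagged: the final bookkeeping does not deliver the stated bound. Your accounting gives the recursion $k_{i+1}\le k_i+\eps^{-2}2^{k_i}$ over at most $\lceil\eps^{-3}\rceil-1$ steps from $k_0=0$. But the first step already allows codimension about $\eps^{-2}$, which exceeds the second tower level $4$ for every $\eps<\tfrac12$, and an iterate of $x\mapsto 2^{x+O(\log(1/\eps))}$ that is ahead of the $2$-tower stays ahead; so this recursion only gives a tower of height roughly $\lceil\eps^{-3}\rceil+\log^*(1/\eps)$, not $T(\eps^{-3})$ as claimed. Two standard repairs: (i) use the coupling already implicit in your argument --- the energy gained at a step is at least $(\text{number of new cosets})\cdot\eps^2 2^{-k_i}$, so steps that enlarge $W$ a lot consume proportionally more of the unit energy budget and the crude ``$\eps^{-2}2^{k_i}$ new cosets every step for $\eps^{-3}$ steps'' scenario cannot occur; or (ii) follow Green and run the increment on the mean-square density of $X$ over the cosets of $H$ (the index) rather than on spectral mass: there each bad coset contributes a single new generator, the recursion becomes $k_{i+1}\le k_i+2^{k_i}$, and the tower of height $\lceil\eps^{-3}\rceil$ falls out cleanly. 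For this paper the discrepancy is harmless, since the application only needs a codimension bound depending on $\eps$ alone (the proof in Section 3 simply takes $c\ge T(\delta^{-3})$), but as a proof of the lemma exactly as stated your last step needs one of these tightenings.
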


If $A_1,A_2,A_3$ were random subsets of $\GF(2)^n$ with 
$|A_i| = \alpha_i 2^n$, we would expect approximately $\alpha_1\alpha_2\alpha_3 2^{2n}$ 
solutions to the linear equation $a_1 + a_2 + a_3 = 0$ with $a_i \in A_i$. The next lemma,
found in [\ref{green}] and also a corollary of [\ref{tv06}, Lemma 4.13], bounds the error
in such an estimate when at least two of these sets are uniform. 
	
\begin{lemma}\label{sumlemma}
Let  $V$ be an $n$-dimensional vector space over $\GF(2)$, and let
$A_1,A_2,A_3 \subseteq V$ with $|A_i|=\alpha_i |V|$.
If $0 < \eps < \tfrac{1}{2}$ and $A_1$ and $A_2$ are $\eps$-uniform, then
\[ \lt|\{(a_1,a_2,a_3) \in A_1 \times A_2 \times A_3 : a_1 + a_2 + a_3 = 0\}\rt| \ge (\alpha_1\alpha_2\alpha_3-\eps) 2^{2n}. \]
\end{lemma}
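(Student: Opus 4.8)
The plan is to use Fourier analysis on $V = \GF(2)^n$. Recall that for a set $A \subseteq V$ with indicator function $1_A$, the Fourier coefficients are $\wh{1_A}(\xi) = 2^{-n}\sum_{x \in V} 1_A(x)(-1)^{\ip{x}{\xi}}$ for $\xi \in V$, so that $\wh{1_A}(0) = \alpha$ when $|A| = \alpha 2^n$. The key observation is that $\eps$-uniformity of $A$ is equivalent to a bound on the nonprincipal Fourier coefficients: since the characters of $V$ correspond to $1$-codimensional subspaces $H_\xi = \{x : \ip{x}{\xi} = 0\}$ (together with $V$ itself, for $\xi = 0$), and $|H_\xi \cap A| - |A \setminus H_\xi| = \sum_{x \in A}(-1)^{\ip{x}{\xi}} = 2^n \wh{1_A}(\xi)$, the set $A$ is $\eps$-uniform if and only if $|\wh{1_A}(\xi)| \le \eps$ for every nonzero $\xi \in V$.

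The main step is then the standard count: the number of solutions to $a_1 + a_2 + a_3 = 0$ is $\sum_{x \in V}(1_{A_1} * 1_{A_2})(x)\,1_{A_3}(x)$, which by Plancherel/Parseval equals $2^{2n}\sum_{\xi \in V}\wh{1_{A_1}}(\xi)\wh{1_{A_2}}(\xi)\wh{1_{A_3}}(\xi)$ (using that $A_i = -A_i$ over $\GF(2)$, so conjugates may be dropped). Splitting off the $\xi = 0$ term gives the main term $\alpha_1\alpha_2\alpha_3 2^{2n}$, and the error is $2^{2n}\sum_{\xi \ne 0}\wh{1_{A_1}}(\xi)\wh{1_{A_2}}(\xi)\wh{1_{A_3}}(\xi)$. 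I would bound the absolute value of this error: pull out $\max_{\xi \ne 0}|\wh{1_{A_1}}(\xi)| \le \eps$ from one factor, then apply Cauchy–Schwarz to $\sum_{\xi \ne 0}|\wh{1_{A_2}}(\xi)\wh{1_{A_3}}(\xi)|$ and use Parseval, $\sum_\xi |\wh{1_{A_i}}(\xi)|^2 = 2^{-n}|A_i| = \alpha_i$, to get a bound of $\eps\sqrt{\alpha_2\alpha_3} \le \eps$ on the error (recall $\alpha_i \le 1$). Since the count is at least the main term minus the absolute error, this yields the claimed inequality $(\alpha_1\alpha_2\alpha_3 - \eps)2^{2n}$. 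Only one of $A_1, A_2$ needs to supply the small-coefficient factor, but Cauchy–Schwarz together with Parseval needs the $\ell^2$ bound on two of the three, which is automatic for all of them — so the hypothesis that \emph{two} sets are $\eps$-uniform is exactly what the argument consumes, since we spend uniformity of $A_1$ on the $\ell^\infty$ bound and implicitly the Parseval bound can use any set; reorganising shows uniformity of $A_1$ and $A_2$ suffices.

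The only genuine obstacle is bookkeeping: making the dictionary between $1$-codimensional subspaces and nonzero characters precise (in particular that every nonzero $\xi$ gives a distinct $1$-codimensional $H_\xi$ and conversely), and being careful that over $\GF(2)$ the Fourier transform is real-valued so no complex conjugates intervene and $a_1+a_2+a_3=0$ is symmetric under negation. Once that is set up, the computation is the textbook one cited from [\ref{tv06}, Lemma 4.13], and I expect the write-up to be short.
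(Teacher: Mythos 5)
Your proof is correct and is essentially the argument the paper relies on: the paper gives no proof of Lemma~\ref{sumlemma}, citing Green [\ref{green}] and [\ref{tv06}, Lemma 4.13], and those sources use exactly this Fourier computation (uniformity as an $\ell^\infty$ bound on nonprincipal coefficients, the convolution/Parseval identity for the triple count, then Cauchy--Schwarz plus Parseval on the error term). Your closing remark can be tightened: the estimate only spends uniformity on one set (the $\ell^\infty$ factor), so the two-set hypothesis is simply more than the argument needs, which is harmless.
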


\section{Triangle-free binary matroids}

We mostly use standard notation from matroid theory [\ref{oxley}]. 
It will also be
convenient to think of a simple rank-$n$ binary matroid as a subset of the vector space $V = \GF(2)^n$. 
For $X \subseteq V - \{0\}$,  we write $M(X)$ for the simple binary matroid on $X$ represented by a binary matrix with column set $X$. % We define the {\em critical number} of $X$ to be the critical number of $M(X)$; that is, the minimum codimension of a subspace of $V$ disjoint from $X$. 

We require an easy lemma about triples of vectors with sum zero.

\begin{lemma}\label{trifreevectors}
	If $X$ is a set of elements in an $n$-dimensional vector space $V$ over $\GF(2)$ with $|X| > 2^{n-1}$, then for all $v \in V$ there exist $x_1,x_2 \in X$ such that $x_1 + x_2 + v = 0$. 
\end{lemma}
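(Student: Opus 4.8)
The plan is to reduce the statement to a one-line pigeonhole count. Working over $\GF(2)$, the equation $x_1 + x_2 + v = 0$ is equivalent to $x_2 = x_1 + v$, so what must be shown is that for every $v \in V$ there is some $x_1 \in X$ with $x_1 + v \in X$; equivalently, $X \cap (X + v) \ne \emptyset$, where $X + v = \{x + v : x \in X\}$ in the notation already introduced in Section~\ref{fourier}.

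The key observation is that the translation map $x \mapsto x + v$ is a bijection of $V$, hence $|X + v| = |X| > 2^{n-1}$. Since $|V| = 2^n$, we get $|X| + |X + v| > 2^n = |V|$, so $X$ and $X + v$ cannot be disjoint subsets of $V$. Picking any $x_2 \in X \cap (X + v)$, by definition of $X + v$ there is $x_1 \in X$ with $x_1 + v = x_2$, and then $x_1 + x_2 + v = x_1 + (x_1 + v) + v = 0$ in characteristic $2$, as required.

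There is no genuine obstacle here: the argument is a single pigeonhole bound on the sizes of $X$ and its translate. The only point worth noting is the degenerate case $v = 0$, where the conclusion asks for $x_1, x_2 \in X$ with $x_1 = x_2$; this is permitted since the statement does not require $x_1 \ne x_2$, and the pigeonhole step still applies because $X \cap (X + 0) = X$ is nonempty (as $|X| > 2^{n-1} \ge \tfrac{1}{2}$).
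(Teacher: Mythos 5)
Your proof is correct and is essentially the same pigeonhole argument as the paper's: the paper partitions $V$ into the $2^{n-1}$ pairs $\{x,x+v\}$ (for $v\neq 0$) and notes some pair lies in $X$, which is just your observation that $X$ and its translate $X+v$ must intersect since $|X|+|X+v|>|V|$. Your handling of the degenerate case $v=0$ matches the paper's remark that this case is trivial.
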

\begin{proof}
	If $v = 0$, the result is trivial. If $v \ne 0$; the elements of $V$ partition into $2^{n-1}$ pairs $(x,y)$ with $x+y+v = 0$. Since $|X| > 2^{n-1}$, some such pair contains two elements of $X$, giving the result. 
\end{proof}

We now prove Theorem~\ref{mainplus} by means of the following stronger result, which shows that the theorem holds not just for triangle-free matroids but for all matroids in which each element is in $o(2^r)$ triangles.

\begin{theorem}
	For each $\eps >0$ there exist $c \in \bZ$ and $\beta >0$ such that, if $M$ is a simple binary matroid with $|M| \ge (\tfrac{1}{4} + \eps)2^{r(M)}$, then either $\cn(M) \le c$, or there is some $e \in E(M)$ contained in at least $\beta 2^{r(M)}$ triangles of $M$. 
\end{theorem}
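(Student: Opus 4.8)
The plan is to apply Green's regularity lemma to the point set $X = E(M) \subseteq V = \GF(2)^n$ (where $n = r(M)$) to find a subspace $H$ of bounded codimension that is $\eps'$-regular with respect to $X$ and $V$, for a suitably small $\eps'$ to be chosen in terms of $\eps$. The codimension bound $T((\eps')^{-3})$ will eventually become (part of) the constant $c$. After passing to $H$, we examine how $X$ distributes among the cosets of $H$. Since $|X| \ge (\tfrac14 + \eps)2^n$ and there are $2^{\codim H}$ cosets, an averaging argument shows that many cosets $H'$ satisfy $|X \cap H'| \ge (\tfrac14 + \tfrac\eps2)|H|$ (say), and by regularity all but an $\eps'$-fraction of cosets $H'$ have the property that some translate $(X \cap H') + v$ with $v \in H'$ is $\eps'$-uniform in $H$.

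The key step is then a trichotomy on the cosets $H'$ of $H$, according to the density $d(H') := |X \cap H'|/|H|$. Call a coset \emph{heavy} if $d(H') > \tfrac12$, \emph{medium} if $\tfrac12 \ge d(H') > \tfrac14 + \tfrac\eps2$ and $(X\cap H')+v$ is $\eps'$-uniform for some $v$, and \emph{light} otherwise. First I would show that no heavy coset exists: if $d(H') > \tfrac12$, pick any element $e$ of $X$ in that coset, translate so that $e$ plays the role of $0$; then Lemma~\ref{trifreevectors} applied inside $H$ (with the translated copy of $X \cap H'$, which has more than $2^{\dim H - 1}$ elements) produces $x_1, x_2 \in X \cap H'$ with $x_1 + x_2 + e = 0$, i.e. a triangle through $e$ — in fact it produces $\Omega(|H|)$ such triangles, since every pair of $X\cap H'$ summing to the translate of $e$ gives one, and there are at least $|X\cap H'| - 2^{\dim H - 1} = \Omega(2^n)$ of them. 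So if a heavy coset exists we land in the second alternative of the theorem with $\beta$ proportional to $\eps \cdot 2^{-\codim H}$.

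Thus we may assume every coset is medium or light. Now I would run the same Lemma~\ref{trifreevectors}/Lemma~\ref{sumlemma} idea across three cosets. Since $M$ is (essentially) triangle-free, for any three cosets $H'_1, H'_2, H'_3$ of $H$ whose ``$H$-labels'' sum to zero in $V/H$, there are few solutions to $a_1 + a_2 + a_3 = 0$ with $a_i \in X \cap H'_i$. If two of these cosets were medium, Lemma~\ref{sumlemma} (applied to the $\eps'$-uniform translates inside $H$, after translating the linear relation appropriately so the translates of the three sets still sum to zero) would force at least $\big((\tfrac14)^3 - \eps'\big)\,|H|^2 = \Omega(2^{2n})$ solutions — more than the at-most-$\beta 2^{2n}$ triangles permitted — unless we again fall into the second alternative. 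Hence, among any ``zero-sum triple'' of cosets, at most one is medium. Translating this into the group $V/H \cong \GF(2)^{\codim H}$: the set $S$ of $H$-labels of medium cosets is a subset of $\GF(2)^{\codim H}$ containing no three elements summing to zero, i.e. $S$ is \emph{triangle-free} in $\GF(2)^{\codim H}$. A triangle-free (sum-free, except we must handle the label $0$ and repeated labels carefully) subset of $\GF(2)^m$ has size at most $2^{m-1}$; combined with the fact that light cosets carry density at most $\tfrac14 + \tfrac\eps2$, we get
\[
|X| \le |H|\Big(|S|\cdot 1 + (2^{\codim H} - |S|)\big(\tfrac14 + \tfrac\eps2\big)\Big) \le |H|\cdot 2^{\codim H}\Big(\tfrac12 + \tfrac12\big(\tfrac14 + \tfrac\eps2\big)\Big),
\]
and one checks $\tfrac12 + \tfrac12(\tfrac14 + \tfrac\eps2) < \tfrac14 + \eps$ fails — wait, rather: a more careful count using that the medium cosets form a sum-free set shows $|X|$ is bounded away from $(\tfrac14+\eps)2^n$ unless many cosets are in fact heavy, a contradiction. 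The surviving case is that \emph{all} cosets are light, i.e. $X$ avoids density $\tfrac14 + \tfrac\eps2$ on every coset of $H$; but then the affine subspace consisting of any single coset on which $X\cap H' = \emptyset$ (which exists by averaging, since the average density is $\tfrac14+\eps$ but... hmm) — more precisely, one shows directly that $X$ must miss an entire coset of some bounded-codimension subspace, which bounds $\cn(M)$ by that codimension, i.e. by $\codim H + O(1)$.

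The main obstacle will be the bookkeeping in the last paragraph: making the counting inequality on coset densities genuinely contradict $|X| \ge (\tfrac14+\eps)2^n$, which requires the sum-free bound on $|S|$ to be combined with a sharper-than-$\tfrac14$ bound on light cosets — in effect one needs that a coset $H'$ which is \emph{not} $\eps'$-uniform (after any translation) must itself have small density, or else be further subdivided, so a genuinely clean argument may need to iterate the regularity/Lemma~\ref{trifreevectors} dichotomy or to choose $\eps'$ in a delicate relation to $\eps$. Identifying the coset (or bounded-codimension flat) that $E(M)$ misses, thereby extracting the critical number bound $c$, is where the care goes; everything else is a direct application of Lemmas~\ref{regularity},~\ref{sumlemma}, and~\ref{trifreevectors}.
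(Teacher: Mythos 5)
There is a genuine gap --- in fact several concrete problems with the middle and end of your argument, even though the opening (Green's regularity lemma plus Lemmas~\ref{trifreevectors} and~\ref{sumlemma}) is the right toolkit. First, the heavy-coset step fails as stated: if $H'=H+w$ with $w \notin H$ and $e, x_1, x_2 \in H'$, then $x_1+x_2 \in H$ while $e \notin H$, so $x_1+x_2+e=0$ has \emph{no} solutions; no triangle has all three elements in a nonzero coset, and translating $X\cap H'$ does not help because triangles are not translation-invariant. The element playing the role of $v$ in Lemma~\ref{trifreevectors} must come from $X \cap H$ itself, and whether $X\cap H$ is empty is exactly where the dichotomy with the critical-number alternative has to be made --- not at the end. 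Relatedly, your final extraction of the bound on $\cn(M)$ is incorrect: a coset $H+w$ ($w\notin H$) disjoint from $X$ gives no bound on the critical number, since $\cn(M)\le k$ requires a \emph{subspace} (a flat of the projective geometry, i.e.\ the coset of $0$) of codimension $k$ disjoint from $E(M)$. Second, your closing count does not close, as you yourself notice: with at most half the labels ``medium'' (at density $\le 1$ each) and the rest at density $\le \tfrac14+\tfrac{\eps}{2}$, the upper bound is about $\tfrac58 \cdot 2^{r}$, which does not contradict $|X|\ge(\tfrac14+\eps)2^{r}$; and the bound of $\tfrac14+\tfrac\eps2$ on ``light'' cosets is unjustified for the non-uniform ones, which can be arbitrarily dense.

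The paper's proof resolves all three issues with a different organization. It first splits on $X\cap H$: if empty, $\cn(M)\le \codim H \le c$ and we are done; otherwise fix $v_0 \in X\cap H$. Since $v_0 \in H$, the elements of \emph{every} coset $H+w$ pair up with sums equal to $v_0$, and the hypothesis that $v_0$ lies in at most $\beta 2^r$ triangles caps every coset density at $\tfrac12+\delta$ (this is the correct form of your ``no heavy coset'' step). Averaging against $|X|\ge(\tfrac14+\eps)2^r$ then forces strictly \emph{more than} $(\tfrac12+\delta)2^k$ labels to have density at least $\tfrac\eps2$; after discarding the at most $\delta 2^k$ non-uniform labels, more than $2^{k-1}$ labels remain that are both dense and $\delta$-uniform, so by Lemma~\ref{trifreevectors} (applied in $\GF(2)^k$, with $v=0$) three of them sum to zero. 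Lemma~\ref{sumlemma} applied to those three cosets yields at least $\delta 2^{-2k}2^{2r}\ge \beta 2^{2r}$ triangles, so some element lies in more than $\beta 2^r$ triangles, a contradiction. Note the direction of the density argument is the reverse of yours: rather than bounding the number of dense labels by a sum-free condition, the proof shows there are too many dense uniform labels for them to avoid a zero-sum triple.
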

\begin{proof}
	We may assume that $\eps < \tfrac{3}{4}$. Let $\delta = \tfrac{1}{16}\eps^3$, noting that $\delta < \tfrac{1}{2}$ and $(1 + 2\delta)^2 < 1 + 2\eps$, and set $c \ge T(\delta^{-3})$. Let $\beta = 2^{-2c}\delta.$
	
	Let $M$ be a simple rank-$r$ binary matroid with $|M| \ge \left(\tfrac{1}{4} + \eps\right)2^{r(M)}$. Let $V = \GF(2)^r$ and $X \subseteq V$ be such that $M = M(X)$.  Suppose that each $e \in E(M)$ lies in at most $\beta 2^{r(M)}$ triangles of $M$.
	
Since $\delta < \tfrac{1}{2}$, by Lemma~\ref{regularity} there is a subspace $H$ of $V$ that is $\delta$-regular with respect to $X$ and $V$ and has codimension $k \le c$ in $V$. If $X \cap H = \varnothing$ then $\chi(M) \le k \le c$, giving the theorem, so we may assume that there is some $v_0 \in X \cap H$.  Let $W$ be the subspace of $V$ that is `orthogonal' to $H$; thus $|W| = 2^k$ and $\{H + w: w \in W\}$ is the collection of cosets of $H$ in $V$. We first claim that $X$ is not too dense in any coset:
	
\begin{claim}
$|X \cap (H+w)| \le \left(\tfrac{1}{2}+\delta\right)2^{r-k}$ for each $w \in W$. 
\end{claim}
\begin{proof}[Proof of claim:]
The elements of $H+w$ partition into $2^{r-k-1}$ pairs adding to $v_0$; since the element of $M$ corresponding to $v_0$ is in at most $\beta 2^r$ triangles of $M$, at most $\beta 2^r$ of these pairs contain two elements of $X$. (This also holds for $w = 0$ since $0 \notin X$.) Therefore \[|(H+w) \cap X| \le 2^{r-k-1} + \beta 2^r \le \left(\tfrac{1}{2} + 2^k\beta\right)2^{r-k} \le \left(\tfrac{1}{2} + \delta\right)2^{r-k},\]
		as required. 
	\end{proof}
	Let $Z = \{w \in W: |X \cap (H + w)| \ge \tfrac{\eps}{2}2^{r-k}\}$.

	\begin{claim}
		$|Z| > \left(\tfrac{1}{2} + \delta\right)2^k$. 
	\end{claim}
	\begin{proof}[Proof of claim:] Using the first claim and $|W \setminus Z| \le 2^k$, we have
		\begin{align*}
			\left(\tfrac{1}{4} + \eps\right)2^r &\le |X| \\
									 &= \sum_{w \in W}|X \cap (H+w)|\\
									 &\le \sum_{w \in Z}\left(\tfrac{1}{2} + \delta\right)2^{r-k} + \sum_{w \in W \setminus Z}\tfrac{\eps}{2}2^{r-k}\\
									 &\le 2^{r-k}\left(\left(\tfrac{1}{2} + \delta\right)|Z| + \tfrac{\eps}{2}2^k\right).
		\end{align*}
		Thus $|Z| \ge \tfrac{1+2\eps}{2(1+2\delta)}2^k > \left(\tfrac{1}{2} + \delta\right)2^k$, where we use $(1+2\delta)^2 < 1 +2 \eps$.
	\end{proof}
	By regularity there are at most $\delta 2^k$ values of $w \in W$ such that $H_w(X)$ is not $\delta$-uniform, so there is a set $Z' \subseteq Z$ such that $|Z'| > 2^{k-1}$ and $H_w(X)$ is $\delta$-uniform for each $w \in Z'$. By Lemma~\ref{trifreevectors}, there are elements $w_1,w_2,w_3 \in Z'$ such that $w_1 + w_2 + w_3 = 0$. The sets $H_{w_1}(X), H_{w_2}(X),H_{w_3}(X)$ are $\delta$-uniform subsets of $H$ with at least $\tfrac{1}{2}\eps 2^{r-k}$ elements; by Lemma~\ref{sumlemma} the number of solutions to $x_1 + x_2 + x_3 = 0$ , so that $x_i \in H_{w_i}(X)$ for each $i \in \{1,2,3\}$, is at least $\left(\left(\tfrac{1}{2}\eps\right)^3-\delta\right)2^{2(r-k)} = \delta 2^{-2k} 2^{2r} \ge \beta 2^{2r}$. For any such solution, the vectors $x_1+w_1,x_2+w_2,x_3+w_3$ are elements of $X$ summing to zero,   so $M$ has at least $\beta 2^{2r}$ triangles. It follows, since $|M| < 2^r$, that some $e \in E(M)$ is in more than $\beta 2^r$ triangles, a contradiction. 
\end{proof}

\subsection*{The lower bound}
Theorem~\ref{mainplus} establishes an upper bound of $\tfrac 1 4$ on the critical
threshold of $C_3$. We have yet to prove Theorem~\ref{mainminus} which
gives the corresponding lower bound. We will in fact prove a stronger result,
Theorem~\ref{mainlb}. However, in the generalisation, we lose the simplicity
of the construction that works for $C_3$, so we give that construction 
here. The construction is very close to that of a `niveau set' 
(see [\ref{green04}], Theorem 9.4). 

	Let $c,n\ge 0$ be integers. Let $X_n$ denote the set of vectors in $\GF(2)^{n+1}$ with first entry zero and Hamming weight greater than $n-c$. Let $Y_n$ denote the set of vectors in $\GF(2)^{n+1}$ with first entry $1$ and Hamming weight at most $\tfrac{1}{2}(n-c)$. Let $M_{c,n}$ denote the matroid $M(X_n \cup Y_n)$. The following lemma implies Theorem~\ref{mainminus}.
	
\begin{lemma}
Let $c\ge 0$ be an integer and $\eps > 0$.
Then, for each sufficiently large integer $n$, the matroid
$M = M_{c,n}$ is triangle-free, has critical number $c+1$, and satisfies $|M| \ge (\tfrac{1}{4}- \eps)2^{r(M)}$.
\end{lemma}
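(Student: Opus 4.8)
The plan is to check the three assertions about $M=M_{c,n}$ one at a time, working throughout in $V=\GF(2)^{n+1}$ with coordinates $x_1,\dots,x_{n+1}$, and using that a vector lies in $X_n$ exactly when $x_1=0$ and at most $c-1$ of $x_2,\dots,x_{n+1}$ vanish, and lies in $Y_n$ exactly when $x_1=1$ and, after deleting the first coordinate, it has weight at most $\tfrac12(n-c)-1$. ``Sufficiently large'' will always mean larger than an explicit function of $c$ (and, for the density, of $\eps$).

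For triangle-freeness, a triangle is a set $\{a,b,c\}$ of distinct elements of $X_n\cup Y_n$ with $a+b+c=0$; comparing first coordinates forces either $a,b,c\in X_n$ or, after relabelling, $a,b\in Y_n$ and $c\in X_n$. In the first case $a$ and $b$ have weight more than $n-c$ on the last $n$ coordinates, so their supports meet in more than $n-2c$ positions and $\wt(a+b)\le 2n-\wt(a)-\wt(b)<2c$, contradicting $c=a+b\in X_n$ once $n\ge 3c$. In the second case, deleting the first coordinate from $a+b+c=0$ writes the restriction of $c$ as a sum of two vectors of weight at most $\tfrac12(n-c)-1$, so of weight at most $n-c-2<n-c$, again contradicting $c\in X_n$.

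For $\cn(M)=c+1$ I would first note, for the upper bound, that the rank-$(n-c)$ flat given by $\{x\in V:x_1=\dots=x_{c+1}=0\}$ meets no element of $E(M)$: members of $Y_n$ have $x_1=1$, and members of $X_n$ have at most $c-1$ zeros among $x_2,\dots,x_{n+1}$, hence a $1$ among $x_2,\dots,x_{c+1}$. For the lower bound it suffices to show every codimension-$c$ subspace $U\le V$ meets $E(M)$, since a subspace of smaller codimension contains one of codimension $c$. Write $U=\bigcap_{i=1}^c\ker\ell_i$ with the $\ell_i$ independent, decompose $\ell_i(x)=\gamma_i x_1+\ell_i'(x_2,\dots,x_{n+1})$, set $L'=(\ell_1',\dots,\ell_c')\colon\GF(2)^n\to\GF(2)^c$, and let $s=\rank(L')\le c$. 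If $s\le c-1$, then $L'(\mathbf 1)$, where $\mathbf 1\in\GF(2)^n$ is all-ones, lies in the span of $s$ of the columns of the matrix of $L'$, so $L'(y)=L'(\mathbf 1)$ for some $y$ of weight at most $s\le c-1$; then $z=\mathbf 1+y$ has at most $c-1$ zeros and $L'(z)=0$, so $(0,z)\in X_n\cap U$. If $s=c$, then $L'$ is onto, so there is $z\in\GF(2)^n$ of weight at most $c$ with $L'(z)=(\gamma_1,\dots,\gamma_c)$ (solve the $c\times c$ system on $c$ independent columns); then $(1,z)$ has weight at most $c+1\le\tfrac12(n-c)$ for $n$ large and $\ell_i(1,z)=\gamma_i+\gamma_i=0$, so $(1,z)\in Y_n\cap U$. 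Either way $U$ meets $E(M)$, so $\cn(M)\ge c+1$.

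For the density, $\{e_1\}\cup\{e_1+e_i:2\le i\le n+1\}\subseteq Y_n$ spans $V$, so $r(M)=n+1$. Now $|X_n|=\sum_{j=0}^{c-1}\binom nj$ is polynomial in $n$, hence $o(2^n)$, while $|Y_n|=\sum_{k=0}^{m}\binom nk$ with $m=\lfloor\tfrac12(n-c)\rfloor-1$; since $\sum_{k\le n/2}\binom nk\ge\tfrac12\cdot 2^n$ and the at most $O(c)$ omitted terms with $m<k\le n/2$ contribute $O\!\bigl(c\binom n{\lfloor n/2\rfloor}\bigr)=o(2^n)$, we get $|M|\ge|Y_n|\ge(\tfrac12-o(1))2^n=(\tfrac14-o(1))2^{r(M)}$, which exceeds $(\tfrac14-\eps)2^{r(M)}$ for $n$ large. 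The one genuinely substantive step is the lower bound $\cn(M)\ge c+1$: the idea to keep in mind is that the required element of $E(M)$ should be hunted for in $X_n$, near the all-ones vector, precisely when the truncated functionals $\ell_i'$ drop rank, and in $Y_n$ otherwise, exploiting that the solution set of $c$ independent $\GF(2)$-linear equations in $n$ unknowns always contains a vector of weight at most $c$; the remaining verifications are routine.
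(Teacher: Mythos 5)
Your proposal is correct and follows essentially the same route as the paper: the same weight computations handle triangle-freeness and density, and your lower-bound case split on $\rank(L')$ is just a reformulation of the paper's split on whether $U$ contains a vector with first entry $1$, yielding the same witnesses (a near-all-ones vector in $X_n$, or a vector of Hamming weight at most $c+1$ in $Y_n$). The only cosmetic differences are your choice of coordinate flat for the upper bound $\cn(M)\le c+1$ and your explicit verification that $r(M)=n+1$, which the paper merely asserts.
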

\begin{proof}
	Suppose that $n > 3c$. Clearly $(Y_n + Y_n) \cap X_n$ and $(X_n + X_n) \cap X_n$ are empty; it follows that $M$ is triangle-free. By Stirling's approximation, $\max_{0 \le i \le n}\binom{n}{i} \le \binom{n}{\lfloor n/2 \rfloor} = O(\tfrac{2^{n}}{\sqrt{n}}) = o(2^n)$, so
	\[|Y_n| = \sum_{i=0}^{\left\lceil n/2 \right\rceil} \binom{n}{i} - \sum_{i= \left\lfloor (n-c)/2 \right\rfloor}^{\lceil n/2\rceil} \binom{n}{i} \ge \tfrac{1}{2} 2^{n} - \tfrac{c}{2} o(2^n);\] 
	since $r(M) = n+1$ and $|M| \ge |Y_n|$, this implies the required lower bound on $|M|$ for sufficiently large $n$. Let $b_1, \dotsc, b_{n+1}$ be the standard basis for $\GF(2)^{n+1}$ and let $\bfj = \sum b_i$. If $W = \spn(\{b_2, \dotsc, b_{n+1-c}\})$, then $\codim(W) = c+1$ and $W \cap E(M) = \varnothing$, so $\chi(M) \le c+1$. 
	
	Finally, we show that $\chi(M) > c$. Let $U$ be a subspace of $\GF(2)^{n+1}$ with $\codim(U) \le c$ and let $A$ be a matrix with at most $c$ rows having null space $U$. If there is some $y \in U$ with first entry $1$, then there exists $x \in \GF(2)^{n+1}$ with first entry zero and Hamming weight at most $\rank(U) \le c$ such that $Ax = A(y + b_1)$, giving $A(x+b_1) = Ay = 0$. Now $x + b_1$ has first entry $1$ and Hamming weight at most $c + 1 < \tfrac{1}{2}(n-c)$, so $x+b_1 \in U \cap Y_n$ and therefore $U \cap E(M) \ne \varnothing$. Suppose, therefore, that every $y \in U$ has first entry zero. Now there is a vector $z \in \GF(2)^{n+1}$ of Hamming weight at most $c$ such that $Az = A\bfj$; we have $z+\bfj \in U$ (and therefore $z + \bfj$ has first entry zero) and $z + \bfj$ has Hamming weight at least $n+1-c$, so $z + \bfj \in X_n \cap U$, again giving $U \cap E(M) \ne \varnothing$. This completes the proof.
\end{proof}

\section{Large girth and critical number}

Jaeger [\ref{j81}] gave a constructive characterisation of matroids with large critical number. 
Erd\H os~[\ref{gcn}] used a probabilistic argument to prove the existence of graphs with large girth and
chromatic number, which, since $\chi(M(G)) = \lceil \log_2(\chi(G)) \rceil$ for each graph $G$, gives binary matroids with large girth and critical number. 
We will use the probabilistic method to construct
such matroids with the additional property that they have a representation comprising only
vectors of large support.

For $x \in \GF(2)^S$, let $\supp(x)$ denote the support of $x$: that is, the set of all $s \in S$ such that $x_s \ne 0$. Let $\wt(x) = |\supp(x)|$ denote the Hamming weight of $x$. 
We require the following technical lemma, concerning vectors of small Hamming weight.

\begin{lemma}\label{nullspace}
	Let $c,s,n \in \bZ$ with $n \ge 2^{c+1}s$ and $s > c$, and let $W$ be a $c \times n$ binary matrix. 
For each $v \in \GF(2)^n$, the number of vectors $x \in \GF(2)^n$ satisfying $Wx = Wv$ and $\wt(x) \le s$ is at least $\lb\frac{n}{2^{c+1}s}\rb^{s-c-1}$. 
\end{lemma}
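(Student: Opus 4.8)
The plan is to count vectors $x$ of weight at most $s$ in the coset $v + \nullspace(W)$ by a greedy/counting argument on the coordinates. Since $W$ has only $c$ rows, its column space lies in $\GF(2)^c$, so the columns fall into at most $2^c$ classes according to which vector of $\GF(2)^c$ they equal; call these classes $P_1,\dots,P_{2^c}$ (discarding any empty ones, and noting a zero column contributes nothing and can be ignored). Choosing a subset $J$ of coordinates and setting $x$ to be the indicator of $J$ gives $Wx = \sum_{j\in J}(\text{column }j)$, so the requirement $Wx = Wv$ becomes a requirement on the multiset of column-classes met by $J$: for each class $P_i$ we only care about the parity $|J\cap P_i| \bmod 2$, and those parities are prescribed by the target $Wv$. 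So it suffices to produce many sets $J$ with $|J|\le s$ hitting each $P_i$ in a prescribed parity.

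Here is the concrete construction. First fix, for each class $P_i$ whose prescribed parity is odd, a single representative coordinate in $P_i$; there are at most $c$ such classes (actually at most $2^c$, but the real bound we need is that at most $c$ of them are nonempty in a way that matters — more carefully, the number of classes we are forced to "touch with an odd coordinate" is at most $\rank(W)\le c$, since the target $Wv$ lies in an at-most-$c$-dimensional space and we can pick an appropriate set of at most $c$ columns realizing it). Let $J_0$ be this fixed set of at most $c$ coordinates. Now I want to augment $J_0$ by adding \emph{pairs} of coordinates lying in a common class $P_i$, which does not change any parity. Each such pair increases $|J|$ by $2$, and I am allowed $\lfloor (s-c)/2\rfloor \ge \tfrac{s-c}{2} - 1$ such additions; in fact since $s > c$ I can afford at least $\tfrac{s-c-1}{2}$ pairs, but to keep the bookkeeping clean I'll choose pairs one class at a time. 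By pigeonhole, some class $P_i$ has size at least $n/2^c$ (the nonzero columns number at least... — actually we just need one class of size $\ge n/2^c$ minus the $c$ already-used coordinates, which is at least $n/2^{c+1}$ using $n \ge 2^{c+1}s \ge 2^{c+1}c$). Restricting attention to that one big class, the number of ways to choose an unordered set of $t := s - c - 1$ further coordinate-pairs — equivalently $2t$ new coordinates partitioned into pairs, but even just choosing $2t$ distinct new coordinates works if I then pair them arbitrarily and only the parity matters, so in fact I may simply choose any even-size subset — from a class of size at least $n/2^{c+1}$ gives at least $\binom{n/2^{c+1}}{\,\le 2t\,}$-type bounds; the cleanest is to say the number of $(s-c-1)$-element subsets of a set of size $\ge n/2^{c+1}$ is at least $\big(\tfrac{n}{2^{c+1}s}\big)^{s-c-1}$, using the standard estimate $\binom{N}{k}\ge (N/k)^k$ with $N\ge n/2^{c+1}$ and $k = s-c-1 \le s$. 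Wait — I need the augmentation to preserve parities, so I should add coordinates in \emph{pairs} within one class; adding $s-c-1$ such pairs costs $2(s-c-1)$ coordinates, so I need $c + 2(s-c-1) \le s$, i.e. $s \ge 2c-2$, which may fail. The fix is to add at most $\lfloor(s-c)/2\rfloor$ pairs; calling this number $t$, I get at least $\big(\tfrac{n}{2^{c+1}s}\big)^{t}$ choices, and then I note $t \ge s-c-1$ is false in general but the exponent the lemma claims, $s-c-1$, is presumably what falls out of their exact accounting, so I will match the paper's bound by being slightly more careful about whether the forced set has size $c$ or $c+1$ — the statement's "$s-c-1$" strongly suggests the forced set has size $c+1$ and the remaining budget $s-(c+1)$ is spent one coordinate at a time against $s-c-1$ distinct large classes or against one large class in pairs with a parity-correcting move.

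The main obstacle, and the step I expect to require the most care, is exactly this bookkeeping: pinning down how many coordinates are "forced" (to fix the prescribed parities from $Wv$) versus "free" (available to generate multiplicity), and ensuring the free coordinates can be added without disturbing the parities while still living in a class — or union of classes — large enough that the number of choices is at least $\big(\tfrac{n}{2^{c+1}s}\big)^{s-c-1}$. The hypothesis $n \ge 2^{c+1}s$ is there precisely so that after removing the $O(c)$ forced coordinates from a largest column-class (of size $\ge n/2^c$), what remains still has size $\ge n/2^{c+1}$, and $s>c$ guarantees the exponent $s-c-1$ is nonnegative so the claimed bound is at least $1$ (consistent with the trivial fact that at least the single vector $x=v$, or $x$ = some weight-$\le c+1$ representative, satisfies the congruence). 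Everything else — the pigeonhole for a large class, the estimate $\binom{N}{k}\ge(N/k)^k$, and the observation that indicator vectors of the constructed sets are distinct and have weight at most $s$ — is routine.
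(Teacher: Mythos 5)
Your outline has the right ingredients (a low-weight representative of the coset, pigeonhole to find a large class of identical columns, parity-preserving augmentation inside that class), which is exactly the paper's strategy, but the proof as written never closes the key counting step, and the way you try to close it does not give the claimed exponent. Counting augmentations by \emph{pairs} inside one class gives multiplicity at most $\binom{|J|}{t}$ with $t\approx (s-c)/2$ pairs, i.e.\ exponent roughly $(s-c)/2$, which is strictly weaker than $s-c-1$; and your alternative suggestion, ``simply choose any $(s-c-1)$-element subset,'' is not legitimate because $s-c-1$ may be odd, in which case adding that many equal columns flips the parity and destroys $Wx=Wv$. You explicitly flag this tension (``the exponent the lemma claims, $s-c-1$, is presumably what falls out of their exact accounting'') and leave it unresolved, so the bound $\lb\frac{n}{2^{c+1}s}\rb^{s-c-1}$ is never actually established.

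The missing idea is to fix a single target cardinality of the correct parity rather than to add pairs one at a time. Take $v_0$ with $Wv_0=Wv$ and $\wt(v_0)\le\rank(W)\le c$, set $I=\supp(v_0)$, and choose (by pigeonhole among the at most $2^c$ column values) a set $J\subseteq[n]\setminus I$ of \emph{equal} columns with $|J|\ge 2^{-c}(n-c)\ge 2^{-c-1}n\ge s$. Now let $k=s-|I|$ if this is even and $k=s-|I|-1$ otherwise, and consider all $x$ with $\supp(x)=I\cup K$ where $K\subseteq J$, $|K|=k$. Since the $k$ added columns are identical and $k$ is even, their sum is zero, so $Wx=Wv_0=Wv$, and $\wt(x)\le s$. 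The number of such $x$ is $\binom{|J|}{k}\ge\lb\frac{|J|}{s}\rb^{k}\ge\lb\frac{n}{2^{c+1}s}\rb^{s-c-1}$, using $k\ge s-|I|-1\ge s-c-1$ and $\frac{n}{2^{c+1}s}\ge 1$. With this replacement for your final paragraph, your argument becomes essentially identical to the paper's proof.
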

\begin{proof}
	Let $[n] = \{1, \dotsc, n\}$ index the column set of $W$. Since $Wv$ is in the column space of $W$, there is a vector $v_0\in \GF(2)^n$ with  $\wt(v_0) \le \rank(W) \le c$
	such that  $Wv_0 = Wv$; let $I = \supp(v_0) \subseteq [n]$. The matrix $W$ has at most $2^c$ distinct columns, so there is a set $J \subseteq [n]-I$ and a vector $w_0 \in \GF(2)^c$ such that $W_j = w_0$ for each $j \in J$ and
	\[ |J| \ge 2^{-c}([n]-|I|) \ge 2^{-c}(n-c) \ge 2^{-c-1}n \ge s. \] 
	If $s - |I|$ is even, then each vector $x$ such that $\wt(x) = s$ and $I \subseteq \supp(x) \subseteq I \cup J$  satisfies $Wx = Wv_0 + (s-|I|)w_0 = Wv$. If $s - |I|$ is odd, then each vector $x$ such that $\wt(x) = s-1$ and $I \subseteq \supp(x) \subseteq I \cup J$ satisfies $Wx = Wv_0 + (s-|I|-1)w_0 = Wv$. The number of vectors $x$ with $\wt(x) \le s$ and $Wx = Wv$ is therefore at least 
	\[ \min\lb \binom{|J|}{s-|I|}, \binom{|J|}{s-1-|I|}\rb \ge \lb\frac{|J|}{s}\rb^{s-|I|-1} \ge \lb\frac{n}{2^{c+1}s}\rb^{s-c-1},\]
	as required.
\end{proof}

The following lemma gives a subset of $\GF(2)^n$ of high girth and critical number, such that every vector has very large Hamming weight. 

\begin{lemma}\label{bespokegc}
	For all integers $c,g \ge 2$ and all sufficiently large $n \in \bZ$, there is a set $Z \subseteq \GF(2)^n$ such that  $M(Z)$ has girth at least $g$ and critical number at least $c$, and $\wt(z) \ge n - 2cg$ for each $z \in Z$. \end{lemma}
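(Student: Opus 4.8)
The plan is to adapt Erd\H os's probabilistic argument for large girth and chromatic number, working entirely inside the set $\cC = \{x \in \GF(2)^n : \wt(x) \ge n - 2cg\}$, so that the weight condition holds automatically, and using the complementation map $x \mapsto \mathbf{1} + x$ to translate assertions about high-weight vectors into assertions about low-weight ones, where Lemma~\ref{nullspace} applies. Recall that $\cn(M(Z)) \ge c$ is equivalent to the statement that every subspace $U$ of $\GF(2)^n$ with $\codim(U) \le c - 1$ meets $Z$ (if $Z$ fails to span, a codimension-$(c-1)$ subspace of $\spn(Z)$ extends to one of $\GF(2)^n$ using a complement of $\spn(Z)$, so the condition over all of $\GF(2)^n$ still suffices), and that $M(Z)$ has girth at least $g$ is equivalent to the statement that no nonempty set of at most $g-1$ vectors of $Z$ sums to zero. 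Writing each $x_i \in \cC$ as $\mathbf{1} + y_i$ with $\wt(y_i) \le 2cg$, a set of $j$ vectors of $\cC$ sums to zero only if $j$ is even (for $n$ large a sum of at most $g-1$ vectors of weight $\le 2cg$ cannot equal $\mathbf{1}$) and $\sum_i y_i = 0$; in particular $\cC$ has no circuit of size at most $3$, so when $g \le 4$ the lemma is immediate by taking $Z = \cC$ and invoking Lemma~\ref{nullspace} as below. Assume henceforth $g \ge 5$.

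Fix an exponent $\alpha \in (0, 2cg)$ to be chosen, set $p = n^{-\alpha}$, and let $Z'$ be obtained from $\cC$ by keeping each vector independently with probability $p$. For the critical number, let $U = \ker W$ have $\codim(U) \le c - 1$; then $x \in U \cap \cC$ exactly when $x = \mathbf{1} + y$ with $Wy = W\mathbf{1}$ and $\wt(y) \le 2cg$, so Lemma~\ref{nullspace} gives $|U \cap \cC| \ge N_0 := \left(n/(2^{c+1}cg)\right)^{2cg - c}$. Hence $|Z' \cap U|$ stochastically dominates a $\mathrm{Bin}(N_0, p)$ variable, and a Chernoff bound together with a union bound over the at most $2^{cn}$ subspaces of codimension at most $c - 1$ shows that, provided $p N_0 \gg cn$ — equivalently $\alpha < 2cg - c - 1$ — with probability at least $\tfrac34$ every such $U$ satisfies $|Z' \cap U| \ge \tfrac12 p N_0$. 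For the girth, the number of $j$-element subsets of $\cC$ summing to zero is at most $|\cC|^{j-1} \le n^{2cg(j-1)}$, so the expected number $\mathbb{E}[D]$ of nonempty subsets of $Z'$ of size at most $g - 1$ summing to zero is at most $\sum_{j} p^j n^{2cg(j-1)}$, a sum dominated by its largest term since $\alpha < 2cg$; by Markov's inequality, with probability at least $\tfrac12$ the actual number $D$ of such subsets is at most $2\mathbb{E}[D]$.

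The crux is to choose $\alpha$ so that $\alpha < 2cg - c - 1$ while simultaneously $\mathbb{E}[D] \ll p N_0$. Comparing powers of $n$, the second requirement reduces to a lower bound $\alpha > \tfrac{2cg(J-2) + c}{J - 1}$, where $J$ is the largest even integer at most $g - 1$; a short computation shows that this lower bound lies strictly below $2cg - c - 1$ — the one place where the generous bound $2cg$ (rather than something smaller) is needed, and the main point to verify. Fix such an $\alpha$ and take $n$ large. With probability at least $\tfrac14 > 0$ we then obtain a set $Z'$ with $D \le 2\mathbb{E}[D] < \tfrac12 p N_0$ and with $|Z' \cap U| \ge \tfrac12 p N_0$ for every subspace $U$ of codimension at most $c - 1$. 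Deleting one vector from each of the at most $D$ offending subsets yields $Z \subseteq Z' \subseteq \cC$: every $z \in Z$ has weight at least $n - 2cg$; no nonempty subset of $Z$ of size at most $g - 1$ sums to zero, so $M(Z)$ has girth at least $g$; and $|Z \cap U| \ge \tfrac12 p N_0 - D > 0$ for every $U$ with $\codim(U) \le c - 1$, so $\cn(M(Z)) \ge c$. The main obstacle is exactly this balancing act in the choice of $\alpha$: a smaller $p$ suppresses short circuits but may leave $Z'$ disjoint from some subspace, while a larger $p$ robustly hits every subspace but creates far too many circuits to delete; the argument closes only because $2cg$ comfortably exceeds the threshold forced by this trade-off.
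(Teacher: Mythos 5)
Your proposal is correct, and at its core it uses the same ingredients as the paper: sample randomly from the set of vectors of weight at least $n-2cg$, use Lemma~\ref{nullspace} (after translating by the all-ones vector) to show that every codimension-$(c-1)$ subspace contains polynomially many such vectors, union bound over the at most $2^{(c-1)n}$ such subspaces, and control short circuits by a first-moment count. The implementation differs, though: the paper draws a uniform $m$-tuple from the high-weight set $S$ with $m \approx (\tfrac{1}{2}|S|)^{1/g} \approx n^{2c}$, chosen so that the expected number of short circuits and the expected number of missed codimension-$(c-1)$ subspaces are each below $\tfrac{1}{2}$; a single expectation argument then yields $Z$ outright, with no concentration inequality and no deletion step. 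You instead use Bernoulli sampling at density $p=n^{-\alpha}$, a Chernoff-plus-union bound for the subspace-hitting property, and an alteration step for the circuits. This route is valid, but it rests on the exponent inequality you flag without verifying, namely $\tfrac{2cg(J-2)+c}{J-1} < 2cg-c-1$; it does hold, since clearing denominators it is equivalent to $(c+1)(J-1) < 2cg-c$, and $J-1 \le g-2$ gives $(c+1)(J-1) \le cg+g-2c-2$, which is less than $2cg-c$ exactly when $g-2 < c(g+1)$, true for all $c,g\ge 2$ (so in fact the parity restriction to even $J$ is not even needed, and the window for $\alpha$ is nonempty). Your remaining details check out: $|U\cap\cC| \ge \bigl(n/(2^{c+1}cg)\bigr)^{2cg-c}$ follows from Lemma~\ref{nullspace} with a $(c-1)$-row matrix, the reduction of $\cn(M(Z))\ge c$ to hitting every codimension-$(c-1)$ subspace of $\GF(2)^n$ is handled correctly even when $Z$ does not span, and the $g\le 4$ case is fine. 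In short, yours is an alteration-method proof where the paper gets by with a plain first-moment argument; slightly longer, but sound.
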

\begin{proof}
	Let $s = 2cg$ and let $\mu = 2^{c(c-s)}s^c$. Let $n$ be a sufficiently large integer such that $n \ge s$ and $(2s^s)^{-1/g} n^{2c} \ge c \mu^{-1}n^{c+1}+1$. We show that the result holds for $n$. 
	
	 Let $S$ be the set of vectors in $\GF(2)^n$ of Hamming weight at least $n-s$ and let $m = \lt\lfloor \lb \tfrac{1}{2}|S| \rb^{1/g} \rt\rfloor$. Using $|S| \ge \left(\frac{n}{s}\right)^s$ and our choice of $n$, we have \[m \ge (\tfrac{1}{2s^s})^{1/g} n^{s/g}-1 = (2s^s)^{-1/g} n^{2c} - 1 \ge c\mu^{-1}n^{c+1}.\]
%$m \ge (2^{-1}s^{-s}n^{3g})^{1/g} -1 \ge \lambda n^3 - n^2$.

	%Let $\lambda = (2^{-1}s^{-s})^{1/g}$ and $\mu = s^{2-s}2^{-c(s-1)}$. We show the lemma holds for all $n \in \bZ$ such that $n \ge 2^cs$ and $\mu \lambda n^2 \ge cn +1$. Let $S$ be the set of vectors in $\GF(2)^n$ of Hamming weight at least $n-s$ and let
%$m = \lt\lfloor \lb \tfrac{1}{2}|S| \rb^{1/g} \rt\rfloor$.
	%It is easy to verify that   $s^{-s}n^s \le|S| \le sn^s$ and $m \ge (2^{-1}s^{-s}n^{3g})^{1/g} -1 \ge \lambda n^3 - n^2$.
	
	For each $m$-tuple $X = (x_1, \dotsc, x_m) \in S^m$ and each integer $k \ge 3$, let $\gamma_k(X)$ be the number of sub-$k$-tuples of $X$ that sum to zero.  Let $\gamma(X) = \sum_{k=3}^{g-1}\gamma_k(X)$; that is, $\gamma(X)$ is the number of `ordered circuits' of length less than $g$ contained in $X$. Similarly, let $\zeta(X)$ denote the number of $(c-1)$-codimensional subspaces of $\GF(2)^n$ that contain no element of $X$. Note that if $\gamma(X) = \zeta(X) = 0$, then the set $Z$ of elements in $X$ has critical number at least $c$ and contains no small circuits, so satisfies the lemma. We show with a probabilistic argument that the required $m$-tuple $X$ exists.  
	
	Let $X = (x_1, \dotsc, x_m)$ be an $m$-tuple drawn uniformly at random from $S^m$. Since the last element in any $k$-tuple in $S^k$ summing to zero is determined by the others, the probability that a $k$-tuple chosen uniformly at random from $S^{k}$ sums to zero is at most $|S|^{-1}$, so we have $\ev(\gamma_k(X)) \le m^k|S|^{-1}$ for each $k$. By linearity, we have 
	\[\ev(\gamma(X)) \le |S|^{-1}\sum_{k=3}^{g-1} m^k < m^g|S|^{-1} \le \frac{1}{2}.\]
	We now consider $\zeta(X)$. Let $F$ be an $(c-1)$-codimensional subspace of $\GF(2)^n$ and let $W$ be a $(c-1) \times n$ binary matrix with null space $F$. If $v$ is a vector chosen uniformly at random from $S$, then $v = v' + \bfj$, where $\bfj$ is the all-ones vector and $v'$ is chosen uniformly at random from $S'$, the set of vectors in $\GF(2)^n$ of Hamming weight at most $s$. We have $v' + \bfj \in F$ if and only if $Wv' = W\bfj$. By Lemma~\ref{nullspace}, the probability that $Wv' = W\bfj$ is at least 
	\[\frac{1}{|S'|}\lb \frac{n}{2^c s} \rb ^{s-c} \ge \left(\frac{s}{n}\right)^s\frac{n^{s-c}}{2^{c(s-c)}s^{s-c}} = \mu n^{-c}.\]
	Therefore the probability that $x_i \notin F$ for all $i \in \{1, \dotsc, m\}$ is at most $(1 - \mu n^{-c})^m$; since there are at most $2^{(c-1)n}$ subspaces $F$ of codimension $c-1$, it follows that 
	\begin{align*}
		\ev(\zeta(X)) \le 2^{(c-1)n}(1-\mu n^{-c})^m \le 2^{(c-1)n}\lb 2^{-\mu n^{-c}}\rb ^m,			
	\end{align*}
	Now, using $m \ge c\mu^{-1} n^{c+1}$, we have $(c-1)n - m\mu n^{-c} \le -n \le -1$.
	Therefore $\ev(\zeta(X)) \le \tfrac{1}{2}$. This gives $\ev(\gamma(X) + \zeta(X)) < 1$, so the required tuple $X_0$ with $\gamma(X_0) = \zeta(X_0) = 0 $ exists. 
	\end{proof}

\section{Critical thresholds}\label{lowerboundsection}

	We now formulate a conjecture predicting the critical threshold for every simple binary matroid, and prove that this prediction is a correct lower bound. To state the conjecture, we use a piece of new terminology. If $k \ge 0$
	is an integer and $M$ is a simple rank-$n$ binary matroid, viewed as a restriction of $G \cong \PG(n-1,2)$, then a \emph{$k$-codimensional subspace} of $M$ is a set of the form $F \cap E(M)$, where $F$ is a rank-$(n-k)$ flat of $G$. Such a set is a flat of $M$ and has rank at most $n-k$, but can also have smaller rank; for example, $\varnothing$ is a $1$-codimensional subspace of any simple binary matroid of critical number $1$.
	
	 Let $\cN$ denote the class of simple binary matroids of critical number $2$; we partition $\cN$ into three subclasses as follows:
 
	\begin{itemize} 
					\item Let $\cN_0$ denote the class of all $N \in \cN$ having a $1$-codimensional subspace $S$ such that $S$ is independent in $N$, and each odd circuit of $N$ contains at least four elements of $E(N)-S$. 
					\item Let $\cNq$ denote the class of all $N \in \cN - \cN_0$ so that some $1$-codimensional subspace of $N$ is independent in $N$.
					\item Let $\cNh = \cN - (\cN_0 \cup \cNq)$. 
	\end{itemize}

	We know from Corollary~\ref{threshupperbound} that binary matroids of critical number $1$ have critical threshold $0$. Our first conjecture predicts the threshold for the binary matroids of critical number $2$.	

\begin{conjecture}\label{crit2}
	For $\delta \in \{0,\tfrac{1}{4},\tfrac{1}{2}\}$, each matroid in $\cN_{\delta}$ has critical threshold $\delta$. 
\end{conjecture}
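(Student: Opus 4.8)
We want to show that each of the three classes $\cN_0$, $\cNq$, $\cNh$ has the claimed critical threshold $\delta \in \{0, \tfrac14, \tfrac12\}$. Since Theorem~\ref{threshupperbound} only gives the universal upper bound $1 - 2^{1-2} = \tfrac12$ for critical number $2$, for the classes $\cN_0$ and $\cNq$ we will need separate upper bound arguments, while for $\cNh$ the upper bound $\tfrac12$ is already in hand and only the lower bound requires work. So the conjecture really splits into (i) the lower bound: for each $\delta$, exhibit, for every $N \in \cN_\delta$, a family of $N$-free binary matroids of unbounded critical number with density approaching $\delta$ from above; and (ii) the upper bounds for $\delta \in \{0, \tfrac14\}$.

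First I would handle the lower bounds, which I expect to be the main content. For $\delta = \tfrac12$ (the class $\cNh$), one takes a binary matroid $M$ of large critical number and high girth — this is exactly what Lemma~\ref{bespokegc} provides — and observes that if every element of a representation of $M$ has Hamming weight close to $n$, then $M$ has no restriction isomorphic to $N$: any candidate copy of $N$ would involve an independent $1$-codimensional subspace or a short odd circuit, and the high-weight / high-girth conditions obstruct these. A density count using $|S| \approx \tfrac12 \cdot 2^n$ (vectors of weight $\ge n - s$, after the translation-by-$\bfj$ trick in Lemma~\ref{bespokegc}) gives density $\to \tfrac12$. For $\delta = \tfrac14$ (the class $\cNq$), the model is the matroid $M_{c,n}$ constructed before Lemma~\ref{mainminus}'s proof: it is triangle-free, has critical number $c+1$, density $\to \tfrac14$, and — crucially — one must check it has no $N$-restriction for $N \in \cNq$, using that $N \notin \cN_0$ (so every odd circuit has few elements outside its independent $1$-codimensional subspace) forces a structure incompatible with $M_{c,n}$. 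For $\delta = 0$ (the class $\cN_0$), the critical threshold being $0$ should follow from the geometric Erd\H{o}s–Stone theorem~[\ref{gn12}] together with the special structure of $\cN_0$: matroids in $\cN_0$ are "close to critical number $1$" in the relevant sense, and one adapts the argument that gives critical threshold $0$ for the odd circuits $C_k$, $k \ge 5$, from~[\ref{gn14}].

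The hard part will be the upper bounds for the classes $\cN_0$ and $\cNq$ — that is, proving that $N$-freeness together with density above $\delta + \eps$ actually forces bounded critical number. For $\cNq$ with threshold $\tfrac14$, this is essentially a generalisation of Theorem~\ref{mainplus}: one wants to run the same Fourier-regularity machinery (Lemma~\ref{regularity} and Lemma~\ref{sumlemma}), but instead of producing a triangle one must produce a copy of the specific matroid $N$, which requires controlling not just solutions to $a_1 + a_2 + a_3 = 0$ but a more elaborate incidence pattern; the fact that $N$ has an independent $1$-codimensional subspace is what should let one reduce to counting short linear configurations inside cosets of a regular subspace. For $\cN_0$ with threshold $0$, the upper bound is the geometric Erd\H{o}s–Stone theorem applied in a neighbourhood of critical number $1$. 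I would expect to need a careful case analysis of how an $N$-restriction can sit inside a dense matroid, organised around the trichotomy defining $\cN_0, \cNq, \cNh$ — indeed that trichotomy was presumably chosen precisely so that each case admits a clean argument, and verifying that the three cases are exhaustive and that the constructions above genuinely avoid $N$-restrictions is where the real care lies.
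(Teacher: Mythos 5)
The statement you are trying to prove is stated in the paper as a \emph{conjecture}, and the paper does not prove it: it only establishes the lower-bound half (via Lemma~\ref{crit2thm} and Theorem~\ref{mainlb}, which show the critical threshold of any $N\in\cN_\delta$ is at least $\delta$), while the generic upper bound of Theorem~\ref{threshupperbound} settles only the $\cNh$ case ($\delta=\tfrac12$). Your proposal correctly identifies this structure, but it is a plan rather than a proof: the upper bounds for $\cN_0$ and $\cNq$ --- precisely the open content of the conjecture --- are left as expectations (``should follow from the geometric Erd\H{o}s--Stone theorem'', ``one wants to run the same Fourier-regularity machinery''). Embedding an arbitrary $N$ with an independent $1$-codimensional subspace, rather than a single triangle, from regularity is not a routine extension of Theorem~\ref{mainplus}, and no argument is given; likewise nothing in [\ref{gn12}] or [\ref{gn14}] is known to yield threshold $0$ for all of $\cN_0$. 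So there is a genuine gap: the hard direction is simply not attempted.

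There are also concrete errors in the lower-bound sketches, which is the part the paper actually does prove. For $\delta=\tfrac12$ you assert a density count ``using $|S|\approx \tfrac12\cdot 2^n$ (vectors of weight $\ge n-s$)''; that set has only $O(n^s)$ elements, so it gives density $o(1)$, not $\tfrac12$. The paper's construction (Case 2 of Lemma~\ref{crit2thm}) instead takes the union of a high-girth, high-weight set $X'$ from Lemma~\ref{bespokegc} with an \emph{entire} affine hyperplane $Y'=\{\sqbinom{1}{y}: y\in\GF(2)^n\}$, and the point is that every rank-$\le r$ restriction has an independent $1$-codimensional subspace (girth), hence is not in $\cNh$. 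For $\delta=\tfrac14$ you propose using $M_{c,n}$ verbatim, but in $M_{c,n}$ the set $X_n$ consists of \emph{all} high-weight vectors, so for a small-rank restriction $R$ the trace $E(R)\cap X_n'$ need not be independent and you cannot conclude $R\in\cN_0$ or $\cn(R)\le 1$; the paper's Case 1 replaces $X_n$ by a high-girth subset from Lemma~\ref{bespokegc}, enlarges $Y$ to weight $\le n-gs$ in $\GF(2)^{2n}$ to recover density $\tfrac14-\eps$, and then needs a separate weight argument to verify the odd-circuit condition defining $\cN_0$. Finally, Theorem~\ref{mainlb} is needed to pass from critical number $2$ to arbitrary critical number by splicing in a projective geometry; your sketch omits this step as well.
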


Note that every simple binary matroid $N$ of critical number $c \ge 2$ has a $(c-2)$-codimensional subspace $F$ such that $\cn(N|F) = 2$. Thus, the minimum in the following conjecture is well-defined, and the conjecture, which clearly implies Conjecture~\ref{threshvals}, predicts the critical threshold for every simple binary matroid of critical number at least $2$. 

\begin{conjecture}\label{critc}
	If $N$ is a simple binary matroid of critical number $c \ge 2$, then the critical threshold for $N$ is $1 - (1-\delta)2^{2-c}$, where  $\delta \in \lt\{0,\tfrac{1}{4},\tfrac{1}{2}\rt\}$  is minimal such that $N|S \in \cN_{\delta}$ for some $(c-2)$-codimensional subspace $S$ of $N$.
\end{conjecture}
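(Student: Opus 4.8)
The plan is to establish the conjectured value $1-(1-\delta)2^{2-c}$ as a matching pair of bounds. First, three consistency checks that shape the proof: when $c=2$ the value is $1-(1-\delta)=\delta$, so the conjecture reduces to Conjecture~\ref{crit2}; when $\delta=\tfrac12$ it is $1-2^{1-c}$, matching the unconditional upper bound of Theorem~\ref{threshupperbound}; and when $N=\PG(t-1,2)$ the relevant minimal $\delta$ turns out to be $\tfrac14$ (a rank-$2$ flat of $\PG(t-1,2)$ is a $U_{2,3}$, which lies in $\cNq$), recovering Conjecture~\ref{pg}. Accordingly I would prove the theorem by handling $c=2$ directly and then reducing $c\ge 3$ to $c=2$ via an appropriate $(c-2)$-codimensional subspace.

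For the lower bound I would use a ``thick-flat'' construction: fix $N$ and a $(c-2)$-codimensional subspace $S$ with $N|S\in\cN_\delta$ and $\delta$ minimal, let $F$ be a rank-$(r-c+2)$ flat of $G\cong\PG(r-1,2)$, and set $E(M)=(E(G)\del F)\cup E(M_0)$, where $M_0$ is (a scaled copy of) an extremal density-$\approx\delta$, $(N|S)$-restriction-free matroid on $F$ of large critical number and large girth, obtained by combining the niveau-set construction behind $M_{c,n}$ with the large-support, high-girth, high-critical-number sets of Lemma~\ref{bespokegc}. A direct count gives $|M|=(1-(1-\delta)2^{2-c}+o(1))2^r$; any codimension-$c'$ subspace disjoint from $E(M)$ must lie inside $F$ and then be disjoint from $E(M_0)$, so $\cn(M)\ge\cn(M_0)+c-2\to\infty$; and the absence of an $N$-restriction is forced by a weight/parity argument in the spirit of the $M_{c,n}$ proof, the girth and support conditions on $M_0$ being exactly what prevents a copy of $N|S$, and hence of $N$, from recruiting the ``full'' cosets outside $F$.

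For the upper bound, fix $\eps>0$ and let $M=M(X)$ be a simple rank-$r$ binary matroid with no $N$-restriction and $|X|\ge(1-(1-\delta)2^{2-c}+\eps)2^r$; the goal is a bound on $\cn(M)$. I would apply Lemma~\ref{regularity} with a small parameter $\delta'$ to obtain a $\delta'$-regular subspace $H$ of codimension $k\le T((\delta')^{-3})$, and study the density profile $\alpha_w=2^{k-r}|X\cap(H+w)|$ over the coset space $W\cong\GF(2)^k$; by regularity, all but an $\eps$-fraction of the non-negligible cosets are $\delta'$-uniform in $H$. The engine is an embedding lemma: iterating Lemma~\ref{sumlemma} (equivalently, the geometric Erd\H os--Stone theorem of~[\ref{gn12}]) should show that if a family of cosets whose ``pattern'' in $W$ is itself sufficiently rich carries dense enough uniform pieces, then $M$ contains a copy of $N$. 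Hence $N$-freeness forces the profile on $W$, together with the structure of $\cN_\delta$, into an extremal regime, and a finite case analysis over $\cN_0$, $\cNq$, $\cNh$ --- the analogue of the two Claims in the proof of Theorem~\ref{mainplus}, which convert a density hypothesis into a coset of $H$ meeting $X$ not at all --- should show that some codimension-$O(1)$ refinement of $H$ is disjoint from $X$, giving $\cn(M)\le k+O(1)$.

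The main obstacle is the embedding lemma and its interaction with the exact density threshold. Lemma~\ref{sumlemma} controls only one linear equation, so building an arbitrary fixed $N$ requires propagating uniformity through many equations while tracking precisely which cosets of $H$ must stay light in order to avoid $N$; it is exactly this bookkeeping that separates $\delta=0$, $\tfrac14$ and $\tfrac12$, and getting it quantitatively tight rather than off by a constant in the density is the crux --- and the reason the present paper proves only the lower bound. The natural first milestone is the $c=2$ case, i.e.\ Conjecture~\ref{crit2}, where $W$ supports only a single hyperplane pattern and the case analysis is genuinely finite; the general case should then follow from the $(c-2)$-codimensional reduction, using that an $(N|S)$-restriction sitting inside a dense neighbourhood can be completed to an $N$-restriction.
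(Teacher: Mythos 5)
The statement you are proving is a \emph{conjecture}: the paper does not prove it, and explicitly says so. What the paper establishes is only the lower bound (Theorem~\ref{mainlb}, via the constructions of Lemma~\ref{crit2thm}), and your proposal does not close that gap --- the entire upper-bound half of your argument is a sketch whose central step (an embedding lemma that builds an arbitrary fixed $N$ by propagating uniformity through many linear equations at the exact density threshold, with the bookkeeping that separates $\delta=0,\tfrac14,\tfrac12$) is precisely the open problem, as you yourself acknowledge. Lemma~\ref{sumlemma} counts solutions of a single equation $a_1+a_2+a_3=0$; there is no analogue in the paper for embedding a general $N$, and even the $c=2$ case (Conjecture~\ref{crit2}) is open. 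So the proposal cannot be accepted as a proof; at best it is a correct identification of the program.

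On the lower bound, your thick-flat construction is essentially the paper's: take a dense, high-critical-number, ``locally simple'' matroid $M_0$ on a corank-$(c-2)$ flat $F_0$ and adjoin all of $\PG(n+c-3,2)\setminus F_0$. One point where your sketch is too weak: you build $M_0$ to be ``$(N|S)$-restriction-free'' for the \emph{chosen} minimal $S$, but a copy of $N$ in $M$ may meet $F_0$ in \emph{any} of its $(c-2)$-codimensional subspaces $S'$ with $\cn(N|S')=2$, so $M_0$ must exclude all of these simultaneously. The paper handles this by making $M_0$ satisfy a uniform structural property --- every restriction of rank at most $r$ either has critical number at most $1$ or lies in $\cN_{\delta'}$ for some $\delta'<\delta$ --- and then the minimality of $\delta$ (every relevant $N|S'$ lies in some $\cN_{\delta''}$ with $\delta''\ge\delta$) yields the contradiction. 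Your appeal to ``a weight/parity argument in the spirit of the $M_{c,n}$ proof'' is the mechanism inside Lemma~\ref{crit2thm}'s $\delta=\tfrac14$ case, not a substitute for this top-level reduction; without the uniform exclusion property the $N$-freeness of $M$ does not follow.
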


Theorem~\ref{mainlb} will show that the value given by the above conjecture is a correct lower bound for the critical threshold. The next lemma deals with the case when $N$ has critical number $2$.
\begin{lemma}\label{crit2thm}
	Let $\delta \in \lt\{0,\tfrac{1}{4},\tfrac{1}{2}\rt\}$. For all integers   $c,r \ge 0$ and $\eps >0$, 
	there is a simple binary matroid $M$ of critical number at least $c$ such that $|M| \ge (\delta - \eps)2^{r(M)}$ and every restriction of $M$ of rank at most $r$ either has critical number at most $1$, or is in $\cN_{\delta'}$ for some $\delta' < \delta$. 
\end{lemma}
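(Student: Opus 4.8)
The plan is to treat all three values $\delta\in\{0,\tfrac14,\tfrac12\}$ with a single construction built from the sets produced by Lemma~\ref{bespokegc}. We may assume $c\ge 2$ and $r\ge 1$, the remaining cases being trivial (when $r=0$ there is no condition on restrictions). Put $g=r+2$ and $s=2cg$. For all sufficiently large $n$, Lemma~\ref{bespokegc} yields a set $Z\subseteq\GF(2)^n$ such that $M(Z)$ has girth at least $r+2$ and critical number at least $c$, and $\wt(z)\ge n-s$ for every $z\in Z$. The role of the girth bound is that a circuit of $M(Z)$ has at least $r+2$ elements and hence rank at least $r+1$, so every subset of $Z$ of rank at most $r$ is independent. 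This disposes of $\delta=0$: take $M=M(Z)$, which has critical number at least $c$, satisfies $|M|/2^{r(M)}>0\ge\delta-\eps$, and has every rank-$\le r$ restriction independent and so of critical number at most $1$.

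For $\delta\in\{\tfrac14,\tfrac12\}$ I would work in $\GF(2)^{n+1}$ with coordinates indexed by $\{0,1,\dots,n\}$, take $Z'=\{(0,z):z\in Z\}$, and let $B=\{x : x_0=1 \text{ and } \wt(x|_{\{1,\dots,n\}})\le w\}$, where $w=\lfloor n/2\rfloor-rs$ if $\delta=\tfrac14$ and $w=n$ (so $B=\{x:x_0=1\}$) if $\delta=\tfrac12$; the candidate matroid is $M=M(Z'\cup B)$. Then $M$ is simple, and since $B$ contains $e_0$ and each $e_0+e_i$ it has rank $n+1$. For the density, $|M|\ge|B|$, which equals $2^n$ when $\delta=\tfrac12$ and equals $\sum_{i\le w}\binom ni\ge 2^{n-1}-rs\binom{n}{\lfloor n/2\rfloor}=(\tfrac12-o(1))2^n$ when $\delta=\tfrac14$; either way $|M|/2^{r(M)}\ge\delta-\eps$ for $n$ large. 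To bound the critical number below, I would show that no subspace $U$ of codimension $k\le\min\{c,w\}$ avoids $E(M)$. Fix a full-row-rank matrix $A$ with $\ker A=U$. If $Ae_0$ lies in the span of $Ae_1,\dots,Ae_n$, then writing $Ae_0=\sum_{i\in T}Ae_i$ with $T\subseteq\{1,\dots,n\}$ and $|T|\le k\le w$ gives $e_0+\sum_{i\in T}e_i\in U\cap B$, a contradiction; otherwise every element of $U$ has $0$th coordinate $0$, so deleting that coordinate realizes $U$ inside $\GF(2)^n$, disjoint from $Z$, with codimension $k-1\le c-1$, contradicting $\cn(M(Z))\ge c$. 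Hence $\cn(M)\ge\min\{c,w\}+1=c+1$ for $n$ large.

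The heart of the argument is the analysis of a restriction $N=M(Y)$ with $\rank(Y)\le r$. Writing $Y=Y_Z\cup Y_B$ with $Y_Z=Y\cap Z'$ and $Y_B=Y\cap B$, the first paragraph forces $Y_Z$ to be independent. If $Y_B=\varnothing$ then $N$ is independent, and if $Y_Z=\varnothing$ then $N$ lies in the affine space $\{x_0=1\}$; in both cases $\cn(N)\le 1$. Otherwise $H:=\spn(Y)\cap\{x_0=0\}$ is a hyperplane of $\spn(Y)$ with $Y_B\cap H=\varnothing$ and $E(N)\cap H=Y_Z$; since $Y_Z$ is independent, extending it to a basis of $H$ and passing to the subspace where the coordinates indexed by $Y_Z$ sum to $0$ gives a hyperplane $H_2$ of $H$ disjoint from $Y_Z$, so $H_2$ is a codimension-$2$ flat of $\spn(Y)$ disjoint from $E(N)$ and $\cn(N)\le 2$. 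If $\cn(N)=2$ then $N\in\cN$ and $Y_Z$ is a nonempty independent $1$-codimensional subspace of $N$; hence $N\in\cN_0$, and if not then $N\in\cNq$, so $N\in\cN_{\delta'}$ for some $\delta'\in\{0,\tfrac14\}$. Since $0,\tfrac14<\tfrac12$, this settles $\delta=\tfrac12$. For $\delta=\tfrac14$ it remains to show $N\in\cN_0$, i.e.\ that every odd circuit $C$ of $N$ has at least four elements outside $H$. The number of such elements is even (the $0$th coordinates of the members of $C$ sum to $0$) and positive ($Y_Z$ is independent), so if it were $2$, say $C=\{a,b\}\cup D$ with $a,b\in Y_B$ and $D\subseteq Y_Z$ of odd size $|D|\le|C|-2\le r-1$, then $a+b=\sum_{d\in D}d$. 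But each $d\in Y_Z$ has weight at least $n-s$ on coordinates $\{1,\dots,n\}$, so the odd-length sum $\sum_{d\in D}d$ has weight at least $n-rs$ there, while $a+b$ has weight at most $2w<n-rs$ there --- a contradiction.

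The step I expect to be the main obstacle is this restriction analysis. One must tune the girth to $r+2$ so that $Y_Z$ is forced to be independent --- which simultaneously delivers $\cn(N)\le 2$ and the independent $1$-codimensional subspace that keeps $N$ out of $\cNh$ --- and, in the $\delta=\tfrac14$ case, calibrate the weight threshold $w$ so that the Hamming-weight gap between short sums drawn from $B$ and odd-length heavy sums drawn from $Z$ rules out the offending circuits. The remaining work is bookkeeping: choosing ``$n$ sufficiently large'' to meet the hypotheses of Lemma~\ref{bespokegc}, to make $w\ge c$, and to validate the density estimates all at once.
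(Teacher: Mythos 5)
Your proposal is correct and follows essentially the same route as the paper: the $\delta=0$ case directly from Lemma~\ref{bespokegc}, and for $\delta\in\{\tfrac14,\tfrac12\}$ the same ``prefix-coordinate'' construction pairing a high-weight, high-girth, high-critical-number set (prefix $0$) with a low-weight set, respectively the whole affine part (prefix $1$), together with the same hyperplane-$\{x_0=0\}$ analysis of low-rank restrictions and the same Hamming-weight-gap argument ruling out odd circuits with only two elements outside $S$. The only differences are cosmetic (ambient dimension $n+1$ with threshold $\lfloor n/2\rfloor-rs$ instead of $2n+1$ with threshold $n-gs$, and an explicit subspace argument for $\cn(M)\ge c$ where the paper simply invokes $\cn(M)\ge\cn(M(X'))\ge c$).
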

\begin{proof}
	We consider the three values of $\delta$ separately. For $\delta = 0$, a matroid $M$ given by Lemma~\ref{bespokegc} with critical number at least $c$ and girth at least $r + 2$ will do, since every rank-$r$ restriction of $M$ is a free matroid and thus has critical number at most $1$. For the other values of $\delta$ we require slightly more technical constructions. 
	
	\textbf{Case 1: $\delta = \tfrac{1}{4}$}. Let $g = r + 2$ and let $s = 2cg$. By Stirling's approximation we have $\binom{2n}{n} \sim \tfrac{1}{\sqrt{\pi n}}2^{2n}$. Let $n \in \bN$ be such that $\binom{2n}{n} \le  \tfrac{2\eps}{gs}  2^{2n}$, and such that there exists a set $X \subseteq \GF(2)^{2n}$, given by Lemma~\ref{bespokegc}, for which $\wt(x) \ge 2n-s$ for each $x \in X$, and $M(X)$ has rank $2n$, girth at least $g$, and critical number at least $c$. Let 	
	\[Y = \lt\{y \in \GF(2)^{2n}: \wt(y) \le n-gs \rt\}.\]
	Let $X',Y' \subseteq \GF(2)^{n+1}$ be defined by $X' = \{ \sqbinom{0}{x}: x \in X \}$ and $Y' = \{\sqbinom{1}{y}: y \in Y\}$. Let $M = M(X' \cup Y')$. First note that $\cn(M) \ge \cn(M(X')) \ge c$. By symmetry of binomial coefficients and the fact that $\binom{2n}{i} \le \binom{2n}{n}$ for each $i$, we have
	\[|M| \ge |Y| \ge \sum_{i = 0}^{n-gs} \binom{2n}{i} \ge \frac{1}{2}\lb 2^{2n} - 2gs\binom{2n}{n} \rb \ge \lb \frac{1}{4} - \eps\rb 2^{2n+1},\]
	so $|M| \ge \lb \tfrac{1}{4}- \eps\rb 2^{r(M)}$. Finally, let $R$ be a restriction of $M$ with $r(R) \le r$. The set $E(R) \cap X'$ contains a $1$-codimensional subspace $S$ of $R$, and since $M(X') = M(X)$ has girth at least $g = r(R) + 2$, the set $S$ is independent in $R$; it follows that $\cn(R) \le 2$. We argue that if $\cn(R) = 2$ then $R \in \cN_0$. 
	
	Let $C$ be an odd circuit of $R$ with $|C - X'| \le 2$, and let $C_X,C_Y \subseteq \GF(2)^{2n}$ be the subsets of $X$ and $Y$ corresponding to $C \cap X'$ and $C \cap Y'$ respectively. Note that $\sum C_X = \sum C_Y$, and $|C_X| + |C_Y| \le r(R) + 1 = g-1$, with $|C_Y| \in \{0,2\}$ and $|C_X|$ odd. By choice of $Y$ we know that $\wt(\sum C_Y) \le 2(n-gs)$. Since every $x \in C_X$ has the form $\bfj + \hat{x}$ where $\bfj$ is the all-ones vector and $\wt(\hat{x}) \le s$, we have $\wt(\sum C_X) \ge 2n - (g-1)s > 2(n-gs) \ge \wt(\sum C_Y)$, a contradiction. Therefore each odd circuit of $R$ contains at least four elements of $E(R)-S$, so $R \in \cN_0$.
		
	\textbf{Case 2: $\delta = \tfrac{1}{2}$}. Let $g = r + 2$ and $n$ be an integer such that there is a set $X \subseteq \GF(2)^n$, given by Lemma~\ref{bespokegc}, so that $M(X)$ has girth at least $g$ and critical number at least $c$. Let $X' = \{\sqbinom{0}{x}: x \in X\}$ and let $Y' = \{\sqbinom{1}{y}: y \in \GF(2)^n\}$. Let $M = M(X' \cup Y')$.
	
	Clearly $\cn(M) \ge \cn(M(X)) \ge c$ and $|M| \ge 2^n \ge \lb \tfrac{1}{2}-\eps \rb 2^{r(M)}$. If $R$ is a restriction of $M$ with $r(R) \le r$, then the set $E(N) \cap X'$ contains a $1$-codimensional subspace $S$ of $R$ and, since $M(X')$ has girth at least $g \ge r(R)+2$, the set $S$ is independent in $R$. It follows that $\cn(R) \le 2$ and $R \notin \cNh$. 
\end{proof}

We can now show that Conjecture~\ref{critc} provides a valid lower bound. 

\begin{theorem}\label{mainlb}
	If $N$ is a simple rank-$r$ binary matroid with critical number $c \ge 2$, then the critical threshold for $N$ is at least $1 - (1-\delta)2^{2-c}$, where $\delta \in \lt\{0,\tfrac{1}{4},\tfrac{1}{2}\rt\}$ is minimal so that $N|S \in \cN_{\delta}$ for some $(c-2)$-codimensional subspace $S$ of $N$.
\end{theorem}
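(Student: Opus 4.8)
The plan is to reduce the general case to the critical-number-2 case handled by Lemma~\ref{crit2thm}, by building a large matroid $M$ with no $N$-restriction and density slightly below $1-(1-\delta)2^{2-c}$ but unbounded critical number. Write $c' = c-2$, so $N$ has a $c'$-codimensional subspace $S_0$ with $N|S_0 \in \cN_{\delta}$, and $\delta$ is minimal with this property. Fix $\eps > 0$ and a target critical number $t$; I want a simple binary matroid $M$ with $\cn(M) \ge t$, no $N$-restriction, and $|M| \ge \lt(1-(1-\delta)2^{2-c} - \eps\rt)2^{r(M)}$.

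The construction combines a ``base'' matroid $M_0$ of critical number at least $t$, produced by Lemma~\ref{crit2thm} at threshold $\delta$ with the rank parameter $r$ taken to be $r(N) = r$, together with a projective-geometry-like layer that boosts the critical number and the density. Concretely, take $M_0 = M_0(X_0)$ from Lemma~\ref{crit2thm}, living in $\GF(2)^{n_0}$, so that every restriction of $M_0$ of rank at most $r$ has critical number at most $1$ or lies in some $\cN_{\delta'}$ with $\delta' < \delta$; crucially $M_0$ has no restriction isomorphic to $N|S_0$, hence (since any $N$-restriction would contain one) it will help block $N$. Now adjoin $c' = c-2$ extra coordinates and place on $M$ the set
\[
E(M) = \lt\{\sqbinom{0}{x} : x \in X_0\rt\} \;\cup\; \lt\{\sqbinom{w}{y} : w \in \GF(2)^{c'}\del\{0\},\ y \in \GF(2)^{n_0}\rt\},
\]
i.e. the $M_0$-copy sits in one coset-direction and all of $\GF(2)^{n_0}$ is thrown in over every nonzero pattern of the $c'$ new coordinates. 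Then $r(M) = n_0 + c'$, and $|E(M)| = |X_0| + (2^{c'}-1)2^{n_0}$. Since $|X_0| \ge (\delta - \eps')2^{n_0}$, we get $|E(M)| \ge (2^{c'}-1 + \delta - \eps')2^{n_0} = \lt(1 - (1-\delta)2^{-c'} - 2^{-c'}\eps'\rt)2^{n_0+c'} = \lt(1-(1-\delta)2^{2-c}-\eps\rt)2^{r(M)}$ for suitable $\eps'$. Also $\cn(M) \ge \cn(M_0) \ge t$, because deleting the $c'$ new coordinates (restricting to the flat where they vanish) recovers $M_0$; one checks that a rank-$(r(M)-c'')$ flat of $\PG$ missing $E(M)$ would, after the obvious coordinate argument, yield one missing $M_0$ of codimension $c''$ in $\GF(2)^{n_0}$.

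It remains to verify that $M$ has no $N$-restriction, and this is the main obstacle. Suppose $N' \le M$ with $N' \cong N$. Since $\cn(N) = c$, any $(c-2)=c'$-codimensional subspace $F$ of $N'$ with $\cn(N'|F) = 2$ corresponds to intersecting with a rank-$(r-c')$ flat of the ambient geometry; I would argue that, by the structure of $E(M)$, such an $N'|F$ must be (isomorphic to) a restriction of the ``$M_0$-layer'' $M_0$ of rank at most $r(N|S) \le r$, because the $(2^{c'}-1)2^{n_0}$ extra points, being all of the relevant cosets, contribute critical number exactly $c'$ and no ``deficiency'' --- so the only place a critical-number-$2$, low-rank piece whose further structure matters can hide is inside the sparse $M_0$ part. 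Then $N'|F$ is a restriction of $M_0$ of rank $\le r$, so by Lemma~\ref{crit2thm} it is either of critical number $\le 1$ (impossible, it has critical number $2$) or lies in $\cN_{\delta'}$ with $\delta' < \delta$; but by minimality of $\delta$, $N$ has no $c'$-codimensional subspace in any $\cN_{\delta'}$ with $\delta' < \delta$, and one must rule out the remaining possibility that $N'|F$ has critical number $2$ yet lies in none of $\cN_0,\cNq,\cNh$ --- which is impossible since those three classes partition all critical-number-$2$ matroids. This contradiction shows no $N$-restriction exists.

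The delicate points I expect to have to nail down carefully: first, the precise coordinate argument that $\cn(M) \ge \cn(M_0)$ and, conversely, that every low-rank critical-number-$2$ restriction of $M$ ``factors through'' $M_0$ --- this requires understanding exactly which rank-$(r-c')$ flats of the geometry on $E(M)$ can meet $E(M)$ in something of critical number $2$, given that the dense layer has critical number exactly $c'$; second, matching the $\cN_0/\cNq/\cNh$ trichotomy through this reduction, i.e. checking that the ``$\delta$-type'' of $N|S_0$ is genuinely inherited by the corresponding restriction of $M_0$, so that Lemma~\ref{crit2thm}'s guarantee ($\delta' < \delta$ or critical number $\le 1$) actually contradicts membership of $N|S_0$ in $\cN_\delta$ with $\delta$ minimal. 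I would carry these out in the order: (1) define $M$ and compute $r(M)$, $|M|$; (2) prove $\cn(M) \ge t$; (3) prove $M$ has no $N$-restriction via the flat/restriction analysis and the trichotomy; (4) conclude the threshold bound by letting $t \to \infty$ and $\eps$ be arbitrary.
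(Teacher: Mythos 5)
Your construction is exactly the paper's: $M$ is $\PG(n_0+c-3,2)$ with the codimension-$(c-2)$ flat $F_0$ spanned by $M_0$ replaced by $E(M_0)$, and your computations of $r(M)$, $|M|$ and $\cn(M)\ge t$ match the paper's. The gap is at the step you yourself flag as the main obstacle: ruling out an $N$-restriction. You propose to take an arbitrary $(c-2)$-codimensional subspace $F$ of a putative restriction $N'\cong N$ with $\cn(N'|F)=2$ and to argue that $N'|F$ ``must be a restriction of the $M_0$-layer'' because the dense layer ``contributes critical number exactly $c-2$ and no deficiency.'' That claim is unjustified and false as stated: the flat of the ambient geometry defining $F$ need not lie in $F_0$, and its intersection with $E(M)$ can perfectly well have critical number $2$ while containing dense-layer points (already a triangle lying entirely in the dense layer is such a restriction), so nothing forces the critical-number-$2$ subspace you start from into the sparse $M_0$ part, and the intended contradiction with the minimality of $\delta$ does not materialize.

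The correct argument runs in the opposite direction, and is the paper's: do not start from a subspace known to have critical number $2$ and try to push it into $M_0$; start from $F_0$ itself. Since $F_0$ has codimension $c-2$ in the ambient geometry, $E(N')\cap F_0$ contains a $(c-2)$-codimensional subspace $S$ of $N'$, which is automatically a restriction of $M_0$ of rank at most $r$; and since intersecting with a hyperplane decreases critical number by at most one, $\cn(N'|S)\ge \cn(N')-(c-2)=2$. Now the dichotomy of Lemma~\ref{crit2thm} (either critical number at most $1$, or membership in $\cN_{\delta'}$ for some $\delta'<\delta$) contradicts, respectively, $\cn(N'|S)\ge 2$ and the minimality of $\delta$. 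With this replacement of your third step, the rest of your outline goes through as written.
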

\begin{proof}
	Let $t \in \bZ$ and let $\eps > 0$. By Lemma~\ref{crit2thm} there exists a rank-$n$ matroid $M_0$ for which $\cn(M_0) \ge t$ and $|M_0| \ge (\delta - \eps)2^n$, and such that every restriction $R_0$ of $M_0$ with $r(R_0) \le r$ satisfies either $\cn(R_0) \le 1$ or $R_0 \in \cN_{\delta'}$ for some $\delta' < \delta$. Let $G \cong \PG(n+c-3,2)$ have $M_0$ as a restriction, and let $F_0 = \cl_G(M_0)$. Set $M = G \del (F_0 - E(M_0))$. 
	
	Since $M_0$ is a restriction of $M$, we have $\cn(M) \ge t$. Moreover,
	\begin{align*}
		|M| &= |G| - |F_0| + |M_0| \\
		&\ge (2^{n+c-2}-1) - (2^n-1) + (\delta-\eps)2^{n}\\
		&= (1 - (1 - \delta + \eps)2^{2-c})2^{n+c-2}\\
		&\ge (1 - (1-\delta)2^{2-c} - \eps)2^{r(M)}.
	\end{align*}
	Finally, suppose for a contradiction that $M$ has a restriction $R \cong N$. The set $E(R) \cap F_0$ contains a $(c-2)$-codimensional subspace $S$ of $R$, and $\cn(R|S) \ge \cn(R) - (c-2) = 2$. However, $R|S$ is also a restriction of $M_0$ of rank at most $r$, so either $\cn(R|S) = 1$ or $R|S \in\cN_{\delta'}$ for some $\delta' < \delta$. The former contradicts $\cn(R|S) \ge 2$ and the latter contradicts the minimality of $\delta$. 
\end{proof}

	Finally, we restate and prove Theorem~\ref{richcase}.
	
	\begin{theorem}
	If $N$ is a simple binary matroid of critical number $c \ge 1$ so that $\cn(N \del I) = c$ for every rank-$(r(N)-c+1)$ independent set $I$ of $N$, then the critical threshold for $N$ is $1 - 2^{1-c}$. 
	\end{theorem}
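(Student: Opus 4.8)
The plan is to obtain the threshold by sandwiching it between the upper bound of Theorem~\ref{threshupperbound} and the lower bound of Theorem~\ref{mainlb}. Since $\cn(N)=c$, Theorem~\ref{threshupperbound} gives that the critical threshold for $N$ is at most $1-2^{1-c}$, so only the matching lower bound remains. If $c=1$ this is immediate (then $1-2^{1-1}=0$, and every critical threshold is nonnegative), so assume $c\ge 2$. By Theorem~\ref{mainlb} the critical threshold for $N$ is at least $1-(1-\delta)2^{2-c}$, where $\delta\in\{0,\tfrac14,\tfrac12\}$ is minimal so that $N|S\in\cN_\delta$ for some $(c-2)$-codimensional subspace $S$ of $N$; since $1-(1-\tfrac12)2^{2-c}=1-2^{1-c}$, it suffices to prove that $\delta=\tfrac12$. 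By the remark preceding Conjecture~\ref{critc}, some $(c-2)$-codimensional subspace $S_0$ of $N$ satisfies $\cn(N|S_0)=2$, so $N|S_0\in\cN_0\cup\cNq\cup\cNh$; hence $\delta=\tfrac12$ will follow once we show that for every $(c-2)$-codimensional subspace $S$ of $N$, $N|S$ has no $1$-codimensional subspace that is independent in $N|S$ (this rules out $N|S\in\cN_0\cup\cNq$, since membership in either class entails having such a subspace, and it then forces $N|S_0\in\cNh$).

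So suppose for a contradiction that $S$ is a $(c-2)$-codimensional subspace of $N$ (in particular a flat of $N$) and that $S'$ is a $1$-codimensional subspace of $N|S$ that is independent in $N|S$. Write $n=r(N)$, view $N$ as a restriction of $G\cong\PG(n-1,2)$, and put $n_S=r(S)\le n-c+2$; then $S'=F'\cap E(N)$ for some flat $F'$ of $G$ with $F'\subseteq\cl_G(S)$ and $r(F')=n_S-1$ (using $\cl_G(S)\cap E(N)=S$). The first step is to observe that $S'$ is in fact an independent flat of $N$: since $\cl_G(S')\subseteq F'$, we get $\cl_N(S')=\cl_G(S')\cap E(N)\subseteq F'\cap E(N)=S'$, so $\cl_N(S')=S'$, and moreover $r(S')\le n_S-1\le n-c+1$.

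The second step is to enlarge $F'$ to a flat $F^*$ of $G$ of rank $n-c+1$ with $F^*\cap E(N)=S'$. Let $F_S$ be the rank-$(n-c+2)$ flat of $G$ with $F_S\cap E(N)=S$, so that $F'\subseteq\cl_G(S)\subseteq F_S$, and choose $F^*$ with $F'\subseteq F^*\subseteq F_S$, $r(F^*)=n-c+1$, and $F^*\cap\cl_G(S)=F'$. Such an $F^*$ exists because $\cl_G(S)/F'$ is a single point of the quotient $F_S/F'$, and a flat of $F_S/F'$ of rank $(n-c+1)-(n_S-1)$ avoiding that point can be chosen (in the extreme case $n_S=n-c+2$ one simply takes $F^*=F'$). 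Then $F^*\cap E(N)\subseteq F_S\cap E(N)=S\subseteq\cl_G(S)$, whence $F^*\cap E(N)=F^*\cap\cl_G(S)\cap E(N)=F'\cap E(N)=S'$. Finally, extend the independent set $S'$ to an independent set $I$ of $N$ with $r(I)=n-c+1$. Since $F^*\cap(E(N)\setminus I)=(F^*\cap E(N))\setminus I=S'\setminus I=\varnothing$, a short rank computation (intersecting $F^*$ with the span of $E(N)\setminus I$, should that set fail to span $G$) gives $\cn(N\setminus I)\le c-1$. As $I$ is a rank-$(r(N)-c+1)$ independent set of $N$, this contradicts the hypothesis that $\cn(N\setminus I)=c$, and the proof is complete.

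I expect the main obstacle to be the second step: producing the flat $F^*$ of exactly the right rank that meets $E(N)$ in no more than $S'$. Promoting $S'$ to a flat of $N$, extending it to an independent set, and the concluding rank estimate are all routine, although that last estimate needs a little care when deleting $I$ lowers the rank of the matroid.
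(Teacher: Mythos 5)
Your proposal is correct and follows essentially the same route as the paper: sandwich the threshold between Theorem~\ref{threshupperbound} and Theorem~\ref{mainlb}, then use the deletion hypothesis to rule out any $(c-2)$-codimensional subspace $S$ with $N|S\in\cN_0\cup\cNq$ by extending an independent $1$-codimensional subspace to a rank-$(r(N)-c+1)$ independent set. Your ``second step'' (building the flat $F^*$ of rank $r(N)-c+1$ meeting $E(N)$ only in $S'$) is just an expanded, correct justification of the estimate the paper states in one line as $\cn(N \del I)\le 1+(c-2)<c$.
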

	\begin{proof}
	The upper bound is given by Corollary~\ref{threshupperbound}, which also gives the theorem when $c = 1$. It thus suffices by Theorem~\ref{mainlb} to show that $N$ has no $(c-2)$-codimensional subspace in $\cN_0 \cup \cNq$. Indeed, if $S$ is such a subspace then $N|S$ has an independent $1$-codimensional subspace $I$, so $\cn((N|S) \del I) = 1$. Moreover, $r_N(I) \le r_N(S)-1 = r(N)-c+1$, and $\cn(N \del I) \le 1 + (c-2) < c$, a contradiction. 
	\end{proof}

\section*{Acknowledgements}
We thank the referees for their careful reading of the manuscript and for their useful comments. 

\section*{References}
\newcounter{refs}
\begin{list}{[\arabic{refs}]}
{\usecounter{refs}\setlength{\leftmargin}{10mm}\setlength{\itemsep}{0mm}}

\item\label{abgkm}
P. Allen, J. B\"{o}ttcher, S. Griffiths, Y. Kohayakawa, R. Morris,
The chromatic thresholds of graphs,
Adv. Math, 235 (2013): 261--295.

\item \label{bq}
J.E. Bonin, H. Qin,
Size functions of subgeometry-closed classes of
representable combinatorial geometries,
Discrete Math. 224, (2000) 37--60.

\item\label{bt}
S. Brandt, S. Thomass\'{e}, 
Dense triangle-free graphs are four-colorable, 
to appear, JCTb. 

\item\label{cr}
H.H. Crapo, G.-C. Rota,
On the foundations of combinatorial theory:
Combinatorial geometries, M.I.T. Press, Cambridge, Mass., 1970.

\item\label{gcn}
P. Erd\H os, 
Graph theory and probability, 
Canad. J. Math. 11 (1959) 34--38.

\item\label{erdossimonovits}
P. Erd\H os, M. Simonovits,
On a valence problem in extremal graph theory,
Discrete Math. 5 (1973), 323--334.

\item \label{es}
P. Erd\H os, A.H. Stone,
On the structure of linear graphs,
Bull. Amer. Math. Soc. 52, (1946) 1087-1091.

\item \label{fk2}
H. Furstenberg, Y. Katznelson,
An ergodic Szemeredi theorem for IP-systems and combinatorial theory,
Journal d'Analyse Math\' ematique 45, (1985) 117--168.

\item\label{gl}
W. Goddard, J. Lyle, 
Dense graphs with small clique number, 
J. Graph Theory 66 (2011) no. 4, 319--331.

\item\label{green04}
B. Green, 
Finite field models in additive combinatorics,
Surveys in combinatorics 2005, London Mathematical Society Lecture Note Series 327 (Cambridge University Press, Cambridge, 2005), pp. 1--27. 

\item\label{green}
B. Green,
A Szemer\'{e}di-type regularity lemma in abelian groups, with applications,
Geometric \& Functional Analysis GAFA 15 (2005), 340--376.

\item\label{gn12}
J. Geelen, P. Nelson, 
An analogue of the Erd\H os-Stone theorem for finite geometries, 
Combinatorica, in press. 

\item\label{gn14}
J. Geelen, P. Nelson, 
Odd circuits in dense binary matroids, 
Combinatorica, to appear. 

\item\label{29}
R. H\"aggkvist,
Odd cycles of specified length in nonbipartite graphs, 
Graph theory (Cambridge, 1981), 89--99.

\item\label{j81}
F. Jaeger, 
A constructive approach to the critical problem for matroids, 
Eur. J. Combin. 2 (1981), 137--144.

\item\label{tl}
T. \L uczak, S. Thomass\' e, 
Coloring dense graphs via VC-dimension,
arXiv:1007.1670 [math.CO]

\item \label{oxley}
J. G. Oxley, 
Matroid Theory,
Oxford University Press, New York (2011).

\item\label{tv06}
T. C. Tao, V. H. Vu, 
Additive Combinatorics, 
Cambridge Studies in Advanced Mathematics, 105, 
Cambridge University Press, Cambridge (2006). 

\item\label{t02}
C. Thomassen, 
On the chromatic number of triangle-free graphs of large minimum degree,
Combinatorica 22 (2002), 591--596

%\item\label{t07}
%C. Thomassen, 
%On the chromatic number of pentagon-free graphs of large minimum degree, 
%Combinatorica 27 (2007), 241--243.

\item\label{welsh}
D.J.A. Welsh,
Matroid Theory, Academic Press, London (1976). Reprinted 2010,
Dover, Mineola.

\end{list}		
\end{document}